\newcommand{\dist}{{\rm dist}}
\newcommand{\Addresses}{{
  \bigskip
  \footnotesize

 \textsc{Department of Mathematics, Saarland University, P.O. Box 151150 \\
66041 Saarbr{\"u}cken, Germany}
and
  \textsc{ Peoples’ Friendship University of Russia (RUDN University),
6 Miklukho-Maklaya St, Moscow, 117198, Russian Federation \\}\par\nopagebreak
  \textit{E-mail address:} \texttt{darya@math.uni-sb.de}

  \bigskip

\textsc{St. Petersburg Department of Steklov Institute, Fontanka 27, St. Petersburg 191023, Russian Federation} and
  \textsc{Faculty of Mathematics and Mechanics, St. Petersburg State University, Universitetskii pr. 28,  St. Petersburg 198504, Russian Federation}\par\nopagebreak
  \textit{E-mail address:} \texttt{al.il.nazarov@gmail.com}
}}
\newtheorem{theorem}{Theorem}
\theoremstyle{definition}
\newtheorem{definition}[theorem]{Definition}
\theoremstyle{definition}
\newtheorem{remark}{Remark}
\theoremstyle{plain}
\newtheorem{thm}{Theorem}[section]
\newtheorem{lemma}[thm]{Lemma}
\author{Darya\,E. Apushkinskaya, Alexander\,I. Nazarov}
\title{On the Boundary Point Principle for divergence-type equations \thanks{
{\it AMS Subject Classification:}
35J15, 35J67, 35B45
\newline
{\it Key words:}
Hopf-Oleinik lemma, divergence-type equations, Dini continuity, Kato class
}}
\begin{document}
\maketitle

\bibliographystyle{amsalpha}
\newcommand{\tg}{\textup{tg}}
\newcommand{\ep}{\varepsilon}
\newcommand{\osc}[1]{\underset{#1}{\textup{osc}}}

\begin{abstract}
We provide some versions of the Zaremba-Hopf-Oleinik boundary point lemma for general elliptic and parabolic equations in divergence form under the sharp requirements on the coefficients of equations and on the boundaries of domains. 
\end{abstract}

\section{Introduction}
The Boundary Point Principle, known also as the ``normal derivative lemma'', is one of the important tools in qualitative analysis of partial differential equations. This principle states that \textit{a supersolution of a partial differential equation with a minimum value at a boundary point, must increase linearly away from its boundary minimum provided the boundary is smooth enough.}

The history of this famous principle begins with a pioneering paper of S.~Zaremba \cite{Z10} where the above assertion was established for the Laplace equation in a three-dimensional domain $\Omega$ satisfying an interior touching ball condition. Notice that the major part of  all known results on the normal derivative lemma concerns  equations with nondivergence structure and strong solutions.  A key contribution to the investigation of this problem for elliptic equations was made simultaneously and independently by E.~Hopf \cite{H52} and O.A.~Oleinik \cite{O52} (by this reason, all the statements of such type are often called the Hopf-Oleinik lemma). The corresponding comprehensive historical review can be found in \cite{AN16}. 

The case of the divergence-type elliptic equations  
\begin{equation} \label{numer}
{\cal L}u:=-D_i(a^{ij}(x)D_ju)+b^i(x)D_iu=0
\end{equation} 
%
is less studied. It is well known that the Boundary Point Principle fails for uniformly elliptic equations in divergence form with bounded and even continuous  coefficients $a^{ij}(x)$ (see, for instance, \cite{G59}, \cite[Ch.3]{GT83}, \cite[Ch.2]{PS07} and \cite{Naz12}). Thus, the normal derivative lemma requires more smoothness of the leading coefficients. 

The sharp requirements on the regularity of the boundary of a domain, providing the validity of the Boundary Point Principle for the Laplace equation, were independently and simultaneously formulated in the papers \cite{MV67} and \cite{W67}.

The first result for weak solutions of 
(\ref{numer}) was proved  by R.~Finn and D.~Gilbarg \cite{FGi57}. They considered  a two-dimensional bounded domain with $\mathcal{C}^{1,\alpha}$-regular boundary, the H{\"o}lder continuous leading coefficients and continuous lower order coefficients.  Recently,  in \cite{KoKu18} (see also \cite{SdL15}) the normal derivative lemma was established in $n$-dimensional domains ($n \geqslant 3$) for equations with the lower-order coefficients from the Lebesgue space $L^q$, $q>n$, under the same assumptions on the leading coefficients and on the boundary  as in \cite{FGi57}.

The history of the Boundary Point Principle for parabolic equations
is much shorter then for elliptic ones and begins with the papers of L.~Nirenberg \cite{N53} and A.~Friedman \cite{F58}. For a partial bibliography in the nondivergence case we refer the reader to \cite{Naz12}. 

As for the divergence-type parabolic equations 
\begin{equation} \label{numer-par}
\mathcal{M}u := \partial_t u-D_i(a^{ij}(x;t)D_ju)+b^i(x;t)D_iu=0,
\end{equation}
we do not know such results. However, the normal derivative lemma 
for (\ref{numer-par}) can be extracted from the lower bound estimates of the Green function for the operator ${\cal M}$. These estimates were obtained in \cite{Zha02}, \cite{C06} and \cite{CKP12} under various assumptions on the coefficients of ${\cal M}$ and on the boundary of a domain. In particular, \cite{CKP12} deals with cylindrical domains with $\mathcal{C}^{1,\alpha}$-regular lateral surface, Dini-continuous leading coefficients
and lower-order coefficient from the so-called parabolic Kato class (see Remark \ref{weaker-Kato-parab} below).
\medskip

The goal of our paper is to prove the Boundary Point Principle for  the general divergence-type elliptic and parabolic equations under strongly weakened assumptions close to the necessary ones.

\subsection{Notation and conventions}
Throughout the paper we use the following notation:

\noindent$x=(x',x_n)=(x_1,\dots,x_{n-1},x_n)$ is a point in
${\mathbb R}^n$;

\noindent $(x;t)=(x',x_n;t)=(x_1,\dots,x_n;t)$ is a point in $\mathbb{R}^{n+1}$;

\noindent $\mathbb{R}^n_+=\{x\in \mathbb{R}^n : x_n>0\}$, \qquad $\mathbb{R}^{n+1}_+=
\{(x;t)\in \mathbb{R}^{n+1} : x_n>0\}$;

\noindent $|x|, |x'|$ are the Euclidean norms in corresponding spaces;

\noindent $B_r(x^0)$ is the open ball in $\mathbb{R}^n$ with center
$x^0$ and radius $r$; \quad $B_r=B_r(0)$;

\noindent
$Q_r(x^0;t^0)=B_r(x^0) \times (t^0-r^2; t^0)$; \quad $Q_r=Q_r(0;0)$;

\medskip

\noindent $D_i$ denotes the operator of (weak) differentiation with respect
to  $x_i$; 

\noindent
$D=\left(D',D_n\right)=\left( D_1, \dots,D_{n-1},D_n\right) $; \quad $\partial_t=\dfrac{\partial}{\partial t}$.
\vspace{0.2cm} 

\noindent 
We adopt the convention that the indices $i$ and $j$ run
from $1$ to $n$. We also adopt the convention regarding summation
with respect to repeated indices.\medskip

We use standard notation for the functional spaces. For a bounded domain ${\cal E}\subset\mathbb{R}^{n+1}$ we understand $\mathcal{C}^{1,0}_{x,t} (\overline{\cal E})$ as the space of $u\in\mathcal{C}(\overline{\cal E})$ such that $Du\in\mathcal{C}(\overline{\cal E})$.

\begin{definition}
We say that a function $\sigma : [0,1]\rightarrow \mathbb{R}_+$ belongs to the class~$\mathcal{D}$~if 
\begin{itemize}
\item $\sigma$ is increasing, and $\sigma (0)=0$;
\item $\sigma (\tau)/\tau$ is summable and decreasing.
\end{itemize}
\end{definition} 

\begin{remark} \label{remark-1}
It should be noted that our assumption about the decay of $\sigma(\tau)/\tau$ is not restrictive (see \cite[Remark~1.2]{AN16} for details). 
Moreover, we claim that without loss of generality $\sigma$ can be assumed continuously differentiable  on $(0;1]$. Indeed, for any  function $\sigma \in \mathcal{D}$, one can define
$$
\hat{\sigma} (r):=2\int\limits_{r/2}^r \frac{\sigma (\tau)}{\tau} \,d\tau, \qquad r \in (0;1].
$$
It is easy to see that $\hat{\sigma} \in \mathcal{C}^1(0;1]$. Due to monotonicity properties of $\sigma$ and $\dfrac{\sigma (\tau)}{\tau}$, we have
\begin{align*}
\hat{\sigma}' (r)&=\frac{2}{r} \left(\sigma (r)-\sigma (r/2)\right) \geqslant 0,\\
\left(\frac{\hat{\sigma}(r)}{r} \right)'&=\frac{1}{r} \left[ \frac{2 \sigma (r)}{r} -\frac{\sigma (r/2)}{(r/2)}-\frac{2}{r}\int\limits_{r/2}^r \frac{\sigma (\tau)}{\tau}\,d\tau \right] \leqslant 0,
\end{align*}
and  for all $r\in (0;1]$
\begin{equation} \label{double-sigma}
\sigma (r) \leqslant \hat{\sigma} (r) \leqslant 2\sigma (r/2).
\end{equation}
The second inequality in (\ref{double-sigma}) provides $\hat{\sigma} \in \mathcal{D}$. Finally, the first inequality in (\ref{double-sigma}) allows us to use $\hat{\sigma}$ instead of $\sigma$ in all estimates, and the claim follows.
\end{remark}

For $\sigma \in \mathcal{D}$ we define the function $\mathcal{J}_{\sigma}$ as
$$
\mathcal{J}_{\sigma}(s):=\int\limits_0^s 
\frac{\sigma (\tau)}{\tau}\,d\tau .
$$

\begin{definition}
Let $\cal E$ be a bounded domain in $\mathbb{R}^n$. We say that a function $\zeta : {\cal E} \to \mathbb{R}$  belongs to the class $\mathcal{C}^{0, \mathcal{D}}({\cal E})$,  if
\begin{itemize}
\item 
$
|\zeta(x)-\zeta (y)|\leqslant \sigma (|x-y|)$ for all $x,y \in \overline{{\cal E}}$, and $\sigma$ belongs to the class $\mathcal{D}$.
\end{itemize}
\medskip

Similarly, suppose that ${\cal E}$ is a bounded domain in $\mathbb{R}^{n+1}$. A function
$\zeta : {\cal E} \to \mathbb{R}$ is said to  belong to the class $\mathcal{C}_p^{0, \mathcal{D}}({\cal E})$,  if
\begin{itemize}
\item 
$
|\zeta(x;t)-\zeta (y;s)|\leqslant \sigma (\sqrt{|t-s|+|x-y|^2})$ for all $(x;t), (y;s) \in \overline{\cal E}$, 
and $\sigma$ belongs to the class $\mathcal{D}$.
\end{itemize}
\end{definition}
\medskip

We use the letters $C$ and $N$ (with or without indices) to denote various constants. To indicate that, say, $C$ depends on some parameters, we list them in parentheses: $C(\dots)$.


\section{Elliptic case}
Let $\Omega$ be a bounded domain in ${\mathbb R}^n$ with boundary $\partial\Omega$, and let $d(x)$ denote the distance between $x$ and $\partial\Omega$.

We suppose that $\partial\Omega$ satisfies the \textit{interior }$\mathcal{C}^{1,\mathcal{D}}$\textit{-paraboloid condition.} The latter means that  in a local coordinate system $\partial\Omega$ is given by the equation
$x_n=F(x')$, where $F$ is a  $\mathcal{C}^1$-function such that $F(0)=0$ and the inequality
\begin{equation} \label{boundary}
F(x') \leqslant |x'|\cdot\sigma (|x'|).
\end{equation}
holds true in some neighborhood of the origin. Here $\sigma$ is a $\mathcal{C}^1$-function belonging to the class $\mathcal{D}$ (see Remark \ref{remark-1}).

\medskip
Let an operator ${\cal L}$ be defined by the formula (\ref{numer}).
Assume that the coefficients of $\mathcal{L}$ satisfy the following conditions:  
\begin{equation} \label{a-condition}
\begin{gathered}
\nu {\cal I}_n\le (a^{ij}(x))\le\nu^{-1}{\cal I}_n,\\
a^{ij} \in \mathcal{C}^{0,\mathcal{D}}(\Omega)\qquad  \text{for all}  \quad i,j=1,\dots,n,
\end{gathered}
\end{equation}
 and
\begin{equation} \label{b-condition}
\omega(r):=\sup\limits_{x \in \Omega}\int\limits_{B_{r}(x)\cap\Omega}\frac{|\mathbf{b}(y)|}{|x-y|^{n-1}}\cdot\frac{d(y)}{d(y)+|x-y|}\,dy
\to 0 \qquad \text{as}\quad r \to 0.
\end{equation}
Here   $\nu$ is a positive constant, ${\cal
I}_n$ is identity $(n\times n)$-matrix, while $\mathbf{b}(y)=\left( b^1(y), \dots, b^n(y)\right) $.

\medskip
\begin{remark} \label{weaker-Kato}
Notice that condition (\ref{b-condition}) says that the function $\dfrac{|\mathbf{b}(y)|}{|x-y|^{n-1}}\cdot\dfrac{d(y)}{d(y)+|x-y|}$ is integrable uniformly with respect to $x$. Moreover, in any strict interior subdomain of $\Omega$ condition (\ref{b-condition}) means that $\mathbf{b}$ is an element of the Kato class $K_{n,1}$. (For the definition of the scale of the Kato classes $K_{n,\alpha}$ with $\alpha <n$ the reader is referred to the paper \cite{DH98}). However, in the whole domain $\Omega$ our condition (\ref{b-condition}) is weaker then $\mathbf{b}\in K_{n,1}$.
\end{remark}
\smallskip

The main result of this Section is stated as follows.
\begin{thm} \label{main-theorem}
Let $\Omega$ be a bounded domain in $\mathbb{R}^n$ with the boundary $\partial\Omega$ satisfying the interior $\mathcal{C}^{1,\mathcal{D}}$-paraboloid condition,  let $\mathcal{L}$ be defined by (\ref{numer}), and let assumptions (\ref{a-condition})-(\ref{b-condition}) be fulfilled. 

In addition, assume that a nonconstant function $u\in \mathcal{C}^1(\overline{\Omega})$ satisfies, in the weak sense, the inequality
$$
\mathcal{L}u\geqslant 0 \qquad \text{in}\quad \Omega.
$$
Then, if $u$ attends its minimum at a point $x^0\in \partial\Omega$, we have
$$
\frac{\partial u}{\partial \mathbf{n}}(x^0) <0.
$$
Here $\frac{\partial}{\partial\mathbf{n}}$ is the  derivative with respect to  the exterior normal on $\partial\Omega$.
\end{thm}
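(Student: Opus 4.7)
The plan is to prove Theorem~\ref{main-theorem} by the classical barrier-and-comparison route, adapted to divergence form and to the borderline (Dini / Kato) regularity.

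\emph{Step 1 (Setup).} After translating so $x^0=0$, shifting $u$ so $u(x^0)=0$, and rotating so that the interior normal at $0$ is $e_n$, the interior $\mathcal{C}^{1,\mathcal{D}}$-paraboloid condition guarantees that for some small $\rho>0$ the set
\[
G \;=\; \bigl\{x\in B_\rho : x_n > |x'|\sigma(|x'|)\bigr\}
\]
is contained in $\Omega$ and touches $\partial\Omega$ only at the origin. A strong maximum principle for $\mathcal{L}$ under hypotheses (\ref{a-condition})--(\ref{b-condition}) (which is standard once one has Harnack, available here since $\mathbf{b}$ is in a local Kato class by Remark~\ref{weaker-Kato}) gives $u>0$ in $\Omega$, hence $m:=\inf_{\partial G\setminus\{0\}}u>0$. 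The proof will be finished once we exhibit a function $w\in H^1(G)\cap\mathcal{C}^1(\overline G)$ with $\mathcal{L}w\leq 0$ weakly in $G$, $w\leq 0$ on the paraboloid part of $\partial G$, $w\leq m$ on the remainder of $\partial G$, and $D_n w(0)>0$; comparison ($\mathcal{L}(u-w)\geq 0$, $u-w\geq 0$ on $\partial G$) will then give $u\geq w$ in $G$ and, since $u(0)=w(0)=0$, $\partial u/\partial\mathbf{n}(0)=-D_nu(0)\leq -D_nw(0)<0$.

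\emph{Step 2 (Barrier).} I would take as skeleton the natural profile $w_0(x)=x_n-A\,|x'|\widehat\sigma(|x'|)$, which is positive on $G$ if $A>1$ and vanishes on a paraboloid strictly enclosing $\{x_n=|x'|\sigma(|x'|)\}$ (so $w_0\leq 0$ on the paraboloid part of $\partial G$). To accommodate the Dini modulus of the leading coefficients I would then correct with a factor involving $\mathcal{J}_\sigma$, setting
\[
w(x) \;=\; \varepsilon\bigl(x_n - A|x'|\widehat\sigma(|x'|)\bigr)\bigl(1-B\,\mathcal{J}_\sigma(|x|)\bigr).
\]
The factor $1-B\mathcal{J}_\sigma(|x|)$ is positive and tends to $1$ at the origin, so $D_n w(0)=\varepsilon>0$, while its derivative produces a negative ``source'' that will cancel the coefficient-oscillation error. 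The factor $\varepsilon$ is free and will be taken small enough, after $\rho$ is fixed, so that $w\leq m$ on the inner piece of $\partial G$.

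\emph{Step 3 (Weak differential inequality).} The main work is to verify $\mathcal{L}w\leq 0$ in the weak sense. Freeze the principal part at the origin, writing $a^{ij}(x)=a^{ij}(0)+r^{ij}(x)$ with $|r^{ij}|\leq\sigma(|x|)$; then for any nonnegative $\varphi\in H^1_0(G)$,
\[
\langle\mathcal{L}w,\varphi\rangle \;=\; \int_G a^{ij}(0)\,D_j w\,D_i\varphi\,dx + \int_G r^{ij}\,D_j w\,D_i\varphi\,dx + \int_G b^i D_i w\,\varphi\,dx.
\]
For the constant-coefficient piece a direct computation shows that $-D_i(a^{ij}(0)D_j w)$ has a strictly negative leading contribution driven by the $\mathcal{J}_\sigma$ correction, on the order of $\varepsilon B\,\sigma(|x|)/|x|$. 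The Dini error $\int\sigma(|x|)|Dw||D\varphi|$ is then absorbed into this negative term after integration by parts, using the decay of $\sigma(\tau)/\tau$ and choosing $B$ large (independent of $\rho$). For the lower-order term I would exploit that $|w(y)|\lesssim d(y)$ on $G$ by construction; testing against $\varphi$ and representing $\varphi$ via its gradient, the Hardy-type estimate converts $\int|\mathbf{b}||Dw|\varphi$ precisely into the weighted integral defining $\omega(r)$ in (\ref{b-condition}), which is small for small $\rho$. This is exactly the reason the weight $d(y)/(d(y)+|x-y|)$ in (\ref{b-condition}) is the right one: it reflects the linear vanishing of $w$ at $\partial\Omega$.

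\emph{Step 4 (Main obstacle).} The hard part is not the geometry but the balancing of the three integrals in Step~3 uniformly in the small parameter $\rho$. One must show that the coefficient $B$ (fixing the size of the $\mathcal{J}_\sigma$ corrector) can be chosen large enough to kill the Dini oscillation, yet the resulting barrier is still compatible with the Kato-type smallness of $\omega(r)$, so that both error terms are jointly dominated by the frozen-coefficient negative term. This requires careful use of the monotonicity of $\sigma(\tau)/\tau$, of (\ref{double-sigma}), and of the boundary-weighted Kato condition. Once this balancing is achieved, fixing first $\rho$ small and then $\varepsilon$ small completes the barrier, comparison in $G$ gives $D_n u(0)\geq D_n w(0)=\varepsilon>0$, and the theorem follows.
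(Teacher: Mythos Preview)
Your proposal has a genuine gap in Step~3, and it is precisely the obstruction that forces the paper to abandon the explicit-barrier route. In divergence form, after freezing the principal part at the origin, the remainder
\[
\int_G r^{ij}(x)\, D_j w\, D_i\varphi\,dx,\qquad |r^{ij}(x)|\le\sigma(|x|),
\]
still carries a derivative on the test function, and there is no legitimate integration by parts that removes it: $r^{ij}$ is only Dini continuous, not $W^{1,1}$, so you cannot move $D_i$ onto $r^{ij}$, and a quantity of the form $\int\sigma(|x|)\,|Dw|\,|D\varphi|$ cannot be ``absorbed'' into any expression of the type $\int(\text{negative density})\cdot\varphi$. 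This is the essential difference from the non-divergence setting, where the analogous remainder $r^{ij}D_{ij}w$ is a pointwise error and can indeed be dominated by the corrector's second derivatives. Your handling of the drift has the same defect: the weight $d(y)/(d(y)+|x-y|)$ in (\ref{b-condition}) does not fall out of a Hardy inequality against a generic $\varphi\in H^1_0$; it is tailored to a Green's function bound, not to weak testing. (There is also a sign issue in your corrector: with $1-B\mathcal{J}_\sigma$ the constant-coefficient Laplacian of $w$ has the wrong sign near the normal ray, but this is secondary.)

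The paper sidesteps all of this. It first flattens $\partial\Omega$ (Remark~\ref{remark-3}) and then takes as barrier the \emph{solution} $v$ of $\mathcal{L}v=0$ in an annulus $A_\rho$ with boundary data $1$ on the inner sphere and $0$ on the outer; no differential inequality has to be checked. The entire effort goes into the gradient bound $D_nv(0)\ge c/\rho$, obtained by two Green's function comparisons: first $|Dv-Dz|\le C\,\omega(2\rho)/\rho$ (Theorem~\ref{Lemma-3.1-SdL}), where $z$ solves the driftless problem and the Gr\"uter--Widman estimate $|D_xG_{0,\rho}(x,y)|\le N\min\{|x-y|^{1-n},\ \dist(y,\partial A_\rho)\,|x-y|^{-n}\}$ is exactly what produces the weighted Kato quantity $\omega$; then $|Dz-D\psi_0|\le C\,\mathcal{J}_\sigma(2\rho)/\rho$ (Lemma~\ref{estimate-w}), where $\psi_0$ is the frozen-coefficient solution and the bound $|D_xD_yG^{x^*}_\rho|\le N|x-y|^{-n}$ converts the Dini modulus into $\mathcal{J}_\sigma$. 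The final comparison step is the same as yours, but the barrier is PDE-defined, not explicit.
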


\begin{remark}\label{remark-3}
Notice that  without restriction, we may assume that $x^0=0$ and $\partial\Omega$ is locally a paraboloid $x_n=|x'|\cdot \sigma (|x'|)$ with  a smooth function $\sigma \in \mathcal{D}$. Further,
all the assumptions on $a^{ij}$ and $\mathbf{b}$ are invariant under the $\mathcal{C}^{1,\mathcal{D}}$-regular change of variables.  So, we may consider $\partial\Omega$  locally  as a flat boundary $x_n=0$ and assume, without loss of generality, that   $B_R \cap \mathbb{R}^n_+ \subset \Omega$ for some $R>0$.
\end{remark}

Consider for $0<\rho<R/2$ the point $x^{\rho}=(0,\dots,0,\rho)$ and the annulus
$$
A_{\rho}:=\{x: \rho/2 < |x-x^{\rho}|<\rho\} \subset \Omega.
$$
Let $x^*$ be an arbitrary point in $\overline{A}_{\rho}$. Following \cite{FGi57} (see also \cite{SdL15})  we define the auxiliary functions $z$ and $\psi_{x^*}$ as solutions of the problems
\begin{equation} \label{auxiliary-1}
\left\{ 
\begin{aligned}
\mathcal{L}_0z&=0 \quad \text{in}\ A_{\rho},\\
z&=1 \quad \text{on}\ \partial B_{\rho /2}(x^{\rho}),\\
z&=0 \quad \text{on}\ \partial B_{\rho}(x^{\rho}),
\end{aligned}
\right.
\qquad \quad
\left\{ 
\begin{aligned}
\mathcal{L}_0^{x^*}\psi_{x^*}&=0 \quad \text{in}\ A_{\rho},\\
\psi_{x^*}&=1 \quad \text{on}\ \partial B_{\rho /2}(x^{\rho}),\\
\psi_{x^*}&=0 \quad \text{on}\ \partial B_{\rho}(x^{\rho}),
\end{aligned}
\right.
\end{equation}
where the operators $\mathcal{L}_0$ and $\mathcal{L}_0^{x^*}$ are determined by the formulas 
$$
\mathcal{L}_0z:=-D_i(a^{ij}(x)D_jz) \qquad \text{and}\qquad \mathcal{L}_0^{x^*}\psi_{x^*}:=-D_i(a^{ij}(x^*)D_j\psi_{x^*}),
$$ respectively. It is well known that $\psi_{x^*} \in \mathcal{C}^\infty(\overline{A}_{\rho})$, and the existence of (unique) weak solution $z$ follows from the general elliptic theory.

\begin{lemma} \label{estimate-w}
There exists $C_1=C_1(n,\nu, \sigma)>0$ such that the inequality 
\begin{equation} \label{gradient-z-psi}
|Dz(x^*)-D\psi_{x^*} (x^*)| \leqslant C_1\,\frac{\mathcal{J}_{\sigma}(2\rho)}{\rho} 
\end{equation}
holds true for all $\rho \leqslant R/2$. 
\end{lemma}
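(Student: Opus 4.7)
The plan is to analyze the difference $w:=z-\psi_{x^*}$, which vanishes on $\partial A_\rho$ and, after subtracting the two problems in (\ref{auxiliary-1}) and regrouping so that the constant-coefficient operator $\mathcal{L}_0^{x^*}$ sits on the left, satisfies
\[
\mathcal{L}_0^{x^*} w \;=\; -D_i\bigl[(a^{ij}(x^*)-a^{ij}(x))\,D_j z\bigr] \qquad \text{in } A_\rho.
\]
The right-hand side is in divergence form, and its "coefficient" $a^{ij}(x^*)-a^{ij}(x)$ is pointwise controlled by $\sigma(|x^*-x|)$ thanks to (\ref{a-condition}); this is the single place where the Dini modulus enters and where the factor $\mathcal{J}_\sigma(2\rho)$ will ultimately come from.

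I would then represent $w$ through the Green's function $G(x,y)$ of $\mathcal{L}_0^{x^*}$ on the smooth annulus $A_\rho$ with homogeneous Dirichlet conditions. Because $\mathcal{L}_0^{x^*}$ has constant coefficients and $\partial A_\rho$ is $\mathcal{C}^\infty$, one has the classical pointwise bound
\[
|D_x D_y G(x,y)|\;\leqslant\;\frac{C(n,\nu)}{|x-y|^n},\qquad x\neq y,
\]
valid up to the boundary and, after the natural rescaling $y=x^\rho+\rho\tilde y$ to a fixed annulus, uniform in $\rho$. Integrating by parts in $y$ in the Green's representation (the boundary term drops because $G(x,\cdot)\equiv 0$ on $\partial A_\rho$) and then differentiating in $x$ produces
\[
D_k w(x^*)\;=\;\int_{A_\rho}\bigl[a^{ij}(x^*)-a^{ij}(y)\bigr]\,D_j z(y)\,D_{x_k}D_{y_i}G(x^*,y)\,dy.
\]

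To close the estimate I would combine this with the uniform gradient bound $\|Dz\|_{L^\infty(A_\rho)}\leqslant C(n,\nu,\sigma)/\rho$, which follows from the interior and boundary $\mathcal{C}^{1,\mathcal{D}}$ theory for divergence-form equations with Dini-continuous leading coefficients applied to the bounded solution $z$ in the smooth annulus with piecewise constant boundary data, rescaled back by a factor of $\rho$. Substituting this bound together with $|a^{ij}(x^*)-a^{ij}(y)|\leqslant\sigma(|x^*-y|)$ and passing to polar coordinates around $x^*$ (noting $\mathrm{diam}(A_\rho)\leqslant 2\rho$) gives
\[
|Dw(x^*)|\;\leqslant\;\frac{C}{\rho}\int_{A_\rho}\frac{\sigma(|x^*-y|)}{|x^*-y|^n}\,dy\;\leqslant\;\frac{C}{\rho}\int_0^{2\rho}\frac{\sigma(r)}{r}\,dr\;=\;\frac{C\,\mathcal{J}_\sigma(2\rho)}{\rho},
\]
which is (\ref{gradient-z-psi}). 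The main obstacle is securing the two quantitative, $\rho$-uniform estimates — the second-mixed-derivative bound on $G$ valid up to $\partial A_\rho$, and the gradient bound on $z$ valid up to $\partial A_\rho$ — with constants independent of where $x^*$ sits in $\overline{A_\rho}$; by dilation to the standard unit annulus both reduce to classical regularity theory for divergence-form equations with Dini-continuous coefficients in $\mathcal{C}^\infty$ domains, which yields the claimed dependence $C_1=C_1(n,\nu,\sigma)$.
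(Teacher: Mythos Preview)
Your argument is correct and follows essentially the same route as the paper: the same auxiliary function $w=z-\psi_{x^*}$, the same Green's-function representation for $\mathcal{L}_0^{x^*}$ on $A_\rho$ followed by integration by parts and differentiation, and the same two quantitative inputs (the bound $|Dz|\leqslant C/\rho$ and the mixed-derivative bound $|D_xD_yG|\leqslant C|x-y|^{-n}$). The paper sources these two inputs explicitly from Lemma~3.2 and Theorem~3.3 of \cite{GW82}, whereas you obtain them by rescaling to the unit annulus and invoking classical $\mathcal{C}^{1,\mathcal{D}}$ regularity; either route yields the stated dependence $C_1=C_1(n,\nu,\sigma)$.
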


\begin{proof}
Setting $w^{(1)}=z-\psi_{x^*}$ we observe that $w^{(1)}$ vanishes  on $\partial A_{\rho}$. Hence, $w^{(1)}$ can be represented in $A_{\rho}$ as
$$
w^{(1)}(x)=\int\limits_{A_{\rho}} G^{x^*}_{\rho}(x,y)\mathcal{L}_0^{x^*}w^{(1)}(y)\,dy\stackrel{(\star)}=
\int\limits_{A_{\rho}} G^{x^*}_{\rho}(x,y) \left(\mathcal{L}_0^{x^*}z(y)-\mathcal{L}_0 z(y)\right)\,dy, 
$$
where $G^{x^*}_{\rho}$ stands for the Green function of the operator $\mathcal{L}_0^{x^*}$ in $A_{\rho}$. The equality $(\star)$ follows from the relation $\mathcal{L}_0^{x^*}\psi_{x^*}=\mathcal{L}_0 z=0$, see (\ref{auxiliary-1}).

Applying integration by parts we get another version of the representation formula:
\begin{equation} \label{w-Green}
w^{(1)}(x)=\int\limits_{A_{\rho}}D_{y_i}G_{\rho}^{x^*}(x,y)\left(a^{ij}(x^*)-a^{ij}(y)\right)D_jz(y)\,dy.
\end{equation}
Differentiating both sides of equality (\ref{w-Green}) with respect to $x_k$ we get
\begin{equation} \label{Dw-Green}
\begin{gathered}
D_kw^{(1)}(x^*)=\int\limits_{A_{\rho}}D_{x_k}D_{y_i}G_{\rho}^{x^*}(x^*,y)\left(a^{ij}(x^*)-a^{ij}(y)\right)D_jz(y)\,dy,\\
k=1,\dots,n.
\end{gathered}
\end{equation}

According to Lemma~3.2 \cite{GW82}, $z\in \mathcal{C}^1(\overline{A}_{\rho})$, and the following estimate holds for $y\in \overline{A}_{\rho}$: 
\begin{equation}
\label{3.11-GW82}
|Dz(y)|\leqslant \frac{N_1}{\rho},
\end{equation}
where $N_1$ depends only on  $n$, $\nu$, and $\sigma$. Moreover, due to Theorem~3.3 \cite{GW82} we have also the estimate for the Green function $G_{\rho}^{x^*}(x,y)$:
\begin{equation} \label{DGreen}
|D_xD_yG_{\rho}^{x^*}(x,y)|\leqslant \frac{N_2}{|x-y|^n} \qquad x,y \in A_{\rho},
\end{equation}
where $N_2$ is completely determined by  $n$, $\nu$, and $\sigma$.

Finally, combination of (\ref{Dw-Green})-(\ref{DGreen}) with condition (\ref{a-condition}) implies 
$$
|Dw^{(1)}(x^*)|\leqslant \frac{N_1N_2}{\rho} \int\limits_{B_{2\rho}(x^*)}\frac {\sigma (|x^*-y|)}{|x^*-y|^n}\,dy,
$$
and (\ref{gradient-z-psi}) follows.
\end{proof}

Further, we introduce the barrier function $v$ defined as the weak solution of the Dirichlet problem
\begin{equation} \label{2.1-SdL}
\left\{
\begin{aligned}
\mathcal{L}v&=0 \quad \text{in}\quad A_{\rho},\\
v&=1 \quad \text{on}\quad \partial B_{\rho/2}(x^{\rho}),\\
v&=0 \quad \text{on}\quad \partial B_{\rho}(x^{\rho}).
\end{aligned}
\right.
\end{equation}

\begin{thm} \label{Lemma-3.1-SdL}
There exists $\rho_0>0$ such that for all $\rho \leqslant \rho_0$ the problem (\ref{2.1-SdL}) admits a unique solution $v\in \mathcal{C}^1(\overline{A}_{\rho})$. Moreover, the inequality
\begin{equation} \label{3.11a-GW82}
|Dv(x)-Dz(x)|\leqslant C_2\,\frac{\omega (2\rho)}{\rho}
\end{equation}
holds true for any $x\in A_{\rho}$. Here $C_2=C_2(n, \nu, \sigma)>0$, $\rho_0$ is completely defined by  $n$, $\nu$, $\sigma$, and $\omega$, while $z\in \mathcal{C}^1(\overline{A}_{\rho})$ is defined in (\ref{auxiliary-1}).
\end{thm}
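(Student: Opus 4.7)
The plan is a perturbation argument built on comparison with $z$. I set $w^{(2)} := v-z$, write down the equation it satisfies, and estimate it via the Green function of $\mathcal{L}_0$ on $A_\rho$ together with the Kato-type condition (\ref{b-condition}). Throughout, $\rho_0$ will be fixed so small that $\omega(2\rho_0)$ absorbs the multiplicative constants furnished by \cite{GW82}; this smallness is what supplies both the existence of $v$ and the final estimate.

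Since $\mathcal{L}v=0$ and $\mathcal{L}_0 z=0$, the function $w^{(2)}$ belongs to $W^{1,2}_0(A_\rho)$ and satisfies
$$
\mathcal{L}_0 w^{(2)} = -b^i D_i v \qquad \text{in } A_\rho.
$$
Existence and uniqueness of $v\in W^{1,2}(A_\rho)\cap\mathcal{C}(\overline{A}_\rho)$ follow from the Fredholm alternative applied to $\mathcal{L}$ with boundary data $1$ and $0$ on the two components of $\partial A_\rho$, coercivity of the associated bilinear form being ensured by the smallness of $\mathbf{b}$ in the Kato-type norm (\ref{b-condition}) for $\rho \leq \rho_0$. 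Up-to-boundary $\mathcal{C}^1$-regularity of $v$ is then obtained from Lemma~3.2 of \cite{GW82} by viewing $\mathbf{b}\cdot Dv$ as a lower-order right-hand side.

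Representing $w^{(2)}$ via the Green function $G_\rho$ of $\mathcal{L}_0$ on $A_\rho$ yields
$$
w^{(2)}(x) = -\int\limits_{A_\rho} G_\rho(x,y)\, b^i(y)\, D_i v(y)\, dy,
$$
and differentiation gives
$$
D w^{(2)}(x) = -\int\limits_{A_\rho} D_x G_\rho(x,y)\, b^i(y)\, D_i v(y)\, dy.
$$
Using a Green function gradient bound of the form
$$
|D_x G_\rho(x,y)| \leqslant \frac{N}{|x-y|^{n-1}}\cdot\frac{d(y)}{d(y)+|x-y|}
$$
(in the spirit of \cite{GW82}) together with the definition of $\omega$, I obtain
$$
|Dw^{(2)}(x)| \leqslant N\,\omega(2\rho)\cdot\sup_{A_\rho}|Dv|.
$$
Combining this with the estimate (\ref{3.11-GW82}) for $|Dz|$ and the triangle inequality leads to
$$
\sup_{A_\rho}|Dv| \leqslant \frac{N_1}{\rho} + N\,\omega(2\rho)\sup_{A_\rho}|Dv|.
$$
Choosing $\rho_0$ so that $N\,\omega(2\rho_0) \leqslant 1/2$ lets us absorb the last term and conclude $\sup_{A_\rho}|Dv|\leqslant 2N_1/\rho$. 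Re-inserting this bound into the estimate for $|Dw^{(2)}|$ produces (\ref{3.11a-GW82}) with $C_2 = 2N_1 N$.

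The main technical obstacle is securing the Green function gradient estimate in the sharp form carrying the boundary-distance factor $d(y)/(d(y)+|x-y|)$ that matches the right-hand side of (\ref{b-condition}): this is the bridge between the Kato-type structure of the drift and the $\omega(2\rho)/\rho$ rate on $|Dw^{(2)}|$. A mild additional nuance is the fact that $d$ in (\ref{b-condition}) is the distance to $\partial\Omega$ rather than to $\partial A_\rho$; since $A_\rho\subset\Omega$ touches $\partial\Omega$ only near the origin, a careful comparison of the two distances inside the annulus is needed. Once the sharp Green function estimate is in place, the rest is a single self-improvement loop calibrated by the smallness of $\omega(2\rho_0)$.
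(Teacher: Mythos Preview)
Your estimate argument is essentially the paper's, rearranged: writing $\mathcal{L}_0 w^{(2)}=-b^iD_iv$, representing $Dw^{(2)}$ via $D_xG_{0,\rho}$, and absorbing $N\omega(2\rho)\sup|Dv|$ for small $\rho_0$ is equivalent to the paper's Neumann-series inversion of $\mathbb{I}+\mathbb{T}_1$ acting on $Dw^{(2)}$. The Green-function bound you write is exactly what the paper uses (Theorem~3.3 of \cite{GW82} gives $|D_xG_{0,\rho}(x,y)|\le N_3\min\{|x-y|^{1-n},\,\dist(y,\partial A_\rho)|x-y|^{-n}\}$, and the ``nuance'' you flag is dispatched by the one-line observation $\dist(y,\partial A_\rho)\le d(y)$).

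The genuine gap is in your existence and regularity step. You assert that coercivity of the bilinear form for $\mathcal{L}$ follows from smallness of $\mathbf{b}$ in the Kato-type norm (\ref{b-condition}); but (\ref{b-condition}) controls integrals of $|\mathbf{b}|$ against Riesz-type kernels, not the $L^2$ pairing $\int b^iD_iu\cdot u$ needed for Lax--Milgram/Fredholm. More seriously, invoking Lemma~3.2 of \cite{GW82} to upgrade $v$ to $\mathcal{C}^1(\overline{A}_\rho)$ ``by viewing $\mathbf{b}\cdot Dv$ as a lower-order right-hand side'' is circular: that lemma treats $\mathcal{L}_0u=0$, and any extension to a right-hand side $f$ would require $f=-\mathbf{b}\cdot Dv$ to lie in a suitable function class, which presupposes the very boundedness of $Dv$ you are trying to prove. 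Your absorption inequality $\sup|Dv|\le N_1/\rho+N\omega(2\rho)\sup|Dv|$ is then vacuous unless $\sup|Dv|<\infty$ is already known.

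The paper avoids this circularity by recasting the problem as the operator equation $(\mathbb{I}+\mathbb{T}_1)Dw^{(2)}=-\mathbb{T}_1Dz$ in $\mathcal{C}(\overline{A}_\rho)$, proving (Lemma~\ref{Lemma-A1}) that $\mathbb{T}_1$ is bounded on $\mathcal{C}$ with norm $\le C_3\omega(2\rho)$ \emph{and} maps into $\mathcal{C}$ (the continuity of $\mathbb{T}_1{\bf f}$ is checked by a small-ball/far-field splitting), and then applying the Banach inverse theorem. This yields existence, uniqueness, and the $\mathcal{C}^1$-bound for $v$ in one stroke, without any prior $W^{1,2}$ solvability or regularity bootstrap.
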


\begin{proof}
Consider in $A_{\rho}$ the auxiliary function
$w^{(2)}=v-z$.  We observe that $w^{(2)}$ vanishes  on $\partial A_{\rho}$, and 
$$
\mathcal{L}_0w^{(2)}=\mathcal{L}_0v=\mathcal{L}v-b^iD_iv=-b^i \left(D_iw^{(2)}+D_iz\right) \quad \text{in}\quad A_{\rho}.
$$
Hence, $w^{(2)}$ can be represented in $A_{\rho}$ via corresponding Green function $G_{0,\rho}(x,y)$ as
$$
w^{(2)}(x)=-\int\limits_{A_{\rho}} G_{0,\rho}(x,y)b^i(y)  \left(D_iw^{(2)}(y)+D_iz(y)\right)\,dy.
$$

Differentiation with respect to $x_k$  gives
$$
D_kw^{(2)}(x)=-\int\limits_{A_{\rho}}D_{x_k}G_{0,\rho}(x,y)  b^i(y)\left(D_iw^{(2)}(y)+D_iz(y)\right)\,dy.
$$
Therefore, we get the relation
\begin{equation} \label{operator-T}
\left(\mathbb{I} +\mathbb{T}_1 \right)Dw^{(2)} =-\mathbb{T}_1 Dz, \end{equation}
where $\mathbb{I}$ stands for the identity operator, while $\mathbb{T}_1$ denotes the matrix operator whose $(k,i)$ entries are  integral operators with kernels $D_{x_k}G_{0,\rho}(x,y)b^i(y)$.

The statement of Theorem follows from the next assertion.

\begin{lemma} \label{Lemma-A1}
The operator $\mathbb{T}_1$ is bounded in ${\cal C}(\overline{A}_\rho)$, and
$$
\|\mathbb{T}_1\|_{{\cal C}\to{\cal C}}\leqslant C_3\,\omega(2\rho),
$$
where $C_3$ depends only on $n$, $\nu$, and $\sigma$.
\end{lemma}

\begin{proof} 
Theorem~3.3 \cite{GW82} provides the estimate
\begin{equation}\label{th-3.3-GW82}
|D_xG_{0,\rho}(x,y)| \leqslant N_3 \min \left\{|x-y|^{1-n}; \dist \{y, \partial A_{\rho}\} |x-y|^{-n}\right\}
\end{equation}
for any $x,y \in A_{\rho}$. Here $N_3$ is the constant depending only on $n$, $\nu$, and $\sigma$. 

Since $\dist \{y, \partial A_{\rho}\} \leqslant d(y)$ for any $y\in A_{\rho}$, the combination of estimate (\ref{th-3.3-GW82}) with condition (\ref{b-condition}) gives
\begin{equation}\label{Br(x)}
\int\limits_{B_{r}(x)\cap A_{\rho}}|D_xG_{0,\rho}(x,y)|\,|\mathbf{b}(y)|\,dy\leqslant 2N_3\omega(r), \qquad x\in \overline A_{\rho}, \quad r\leqslant 2\rho.
\end{equation}

For arbitrary vector function ${\bf f}\in {\cal C}(\overline{A}_\rho)$ we have
$$
|\mathbb{T}_1{\bf f}(x)|\leqslant \|{\bf f}\|_{{\cal C}(\overline{A}_\rho)}\cdot \int\limits_{A_{\rho}}|D_xG_{0,\rho}(x,y)|\,|\mathbf{b}(y)|\,dy\leqslant 2N_3\,\omega(2\rho) \cdot \|{\bf f}\|_{{\cal C}(\overline{A}_\rho)}, \ \ x\in \overline{A}_\rho.
$$

It remains to show that $\mathbb{T}_1{\bf f}\in {\cal C}(\overline{A}_\rho)$. For $x,\tilde{x}\in\overline{A}_\rho$ and any small $\delta>0$ we have
$$
\aligned
&(\mathbb{T}_1{\bf f})(x)-(\mathbb{T}_1{\bf f}) (\tilde{x}) =J_1+J_2\\
&:= \Big(\int\limits_{A_{\rho}\cap B_\delta(\tilde{x})}+\int\limits_{A_{\rho}\setminus B_\delta(\tilde{x})}\Big)\big(D_xG_{0,\rho}(x,y)-D_xG_{0,\rho}(\tilde{x},y)\big) \, \left[\mathbf{b}(y)\cdot {\bf f}(y)\right]\,dy.
\endaligned
$$
If $|x-\tilde{x}|\leqslant\delta/2$ then (\ref{Br(x)}) gives
$$
\aligned
|J_1|&\leqslant \|{\bf f}\|_{{\cal C}(\overline{A}_\rho)}\cdot \int\limits_{B_\delta(\tilde{x})\cap A_{\rho}}\big(|D_xG_{0,\rho}(x,y)|+|D_xG_{0,\rho}(\tilde{x},y)|\big)\,|\mathbf{b}(y)|\,dy\\
&\leqslant 2 N_3\,\omega(3\delta/2) \cdot \|{\bf f}\|_{{\cal C}(\overline{A}_\rho)}.
\endaligned
$$
Thus, given $\varepsilon$ we can choose $\delta$ such that $|J_1|\leqslant\varepsilon$.

On the other hand, $D_xG_{0,\rho}(x,y)$ is continuous for $x\ne y$. Thus, it is equicontinuous on the compact set 
$$
\{(x,y):\ x\in \overline{B}_{\delta/2}(\tilde{x})\cap \overline{A}_\rho, \ y\in \overline{A}_\rho\setminus B_\delta(\tilde{x}) \}.
$$
Therefore, for chosen $\delta$ we obtain, as $|x-\tilde{x}|\to 0$,
$$
|J_2|\leqslant \|{\bf f}\|_{{\cal C}(\overline{A}_\rho)}\cdot\int\limits_{A_{\rho}} |\mathbf{b}(y)|\,dy \cdot
\max\limits_{y\in \overline{A}_\rho\setminus B_\delta(\tilde{x})} |D_xG_{0,\rho}(x,y)-D_xG_{0,\rho}(\tilde{x},y)| \to 0,
$$
and the Lemma follows.
\end{proof}

We continue the proof of Theorem \ref{Lemma-3.1-SdL}. Choose the value of $\rho_0$ so small that $\omega (2\rho_0) \leqslant \left(2C_3\right)^{-1}$, where $C_3$ is the constant from Lemma \ref{Lemma-A1}. Then by the Banach theorem the operator $\mathbb{I} +\mathbb{T}_1$ in
(\ref{operator-T}) is invertible. This gives the
existence and uniqueness of $w^{(2)}\in{\cal C}^1(\overline{A}_\rho)$, and thus, the
unique solvability of the problem (\ref{2.1-SdL}). Moreover, Lemma \ref{Lemma-A1} and inequality (\ref{3.11-GW82}) provide (\ref{3.11a-GW82}). The proof is complete.
\end{proof}

To prove Theorem \ref{main-theorem} we need the following maximum principle.

\begin{lemma} \label{Lemma-A2}
Let $\mathcal{L}$ be defined by (\ref{numer}), and let assumptions (\ref{a-condition})-(\ref{b-condition}) be satisfied in a domain $\cal E$. Suppose that a function $w\in {\cal C}^1({\cal E})$ satisfies $\mathcal{L}w \geqslant 0$ in ${\cal E}$.
If $w$ attains its minimum in an interior point of $\cal E$ then $w=const$.
\end{lemma}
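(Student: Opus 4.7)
The plan is the classical strong-minimum-principle argument. Replace $w$ by $\tilde w := w - \min_{\overline{\mathcal E}} w \geqslant 0$, which remains a weak supersolution, $\mathcal L \tilde w \geqslant 0$, and attains the value $0$ at some interior point. Consider the coincidence set $\mathcal Z := \{x \in \mathcal E : \tilde w(x) = 0\}$: it is non-empty by hypothesis and closed in $\mathcal E$ by continuity of $\tilde w$. Since $\mathcal E$ is connected, it suffices to prove that $\mathcal Z$ is open; then $\mathcal Z = \mathcal E$ and $w$ is constant.

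For the openness step, fix $x_0 \in \mathcal Z$ and choose $\rho > 0$ small enough that $\overline{B_{2\rho}(x_0)} \subset \mathcal E$ and $\omega(2\rho) \leqslant (2C_3)^{-1}$, where $C_3$ denotes the constant from Lemma \ref{Lemma-A1} applied with $B_{2\rho}(x_0)$ in place of the annulus $A_\rho$. The central tool is a weak Harnack inequality for non-negative weak supersolutions of $\mathcal L$: there exist $q,c>0$, depending only on $n$, $\nu$, $\sigma$ and $\omega$, such that
$$
\inf_{B_{\rho/2}(x_0)} \tilde w \;\geqslant\; c \left( |B_\rho|^{-1} \int_{B_\rho(x_0)} \tilde w^q \, dx \right)^{1/q}.
$$
Since $\tilde w \geqslant 0$ is continuous with $\tilde w(x_0) = 0$ and $x_0 \in B_{\rho/2}(x_0)$, the left-hand side vanishes; hence $\tilde w \equiv 0$ almost everywhere on $B_\rho(x_0)$, and everywhere by continuity. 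Thus $B_\rho(x_0) \subset \mathcal Z$, proving openness.

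The weak Harnack inequality itself would be obtained in the spirit of Lemma \ref{Lemma-A1}. The classical Moser theory for the pure divergence operator $\mathcal L_0 := -D_i(a^{ij}D_j\,\cdot)$, whose coefficients are bounded, uniformly elliptic and Dini-continuous, supplies the weak Harnack for $\mathcal L_0$-supersolutions; the drift term $b^i D_i$ is then absorbed by rewriting $\mathcal L_0 \tilde w \geqslant -b^i D_i \tilde w$ in integral form through the Green function $G_{0,\rho}$ and invoking a contraction of the form $\|\mathbb T_1\|_{\mathcal C \to \mathcal C} \leqslant C_3 \omega(2\rho) \leqslant \tfrac12$, exactly as in (\ref{operator-T}).

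The principal obstacle is that the Kato-type hypothesis (\ref{b-condition}) is strictly weaker than $\mathbf b \in L^q$ with $q > n$, so Trudinger's classical Moser-type proof of the weak Harnack inequality does not apply verbatim. This is precisely where the Green-function perturbation device of Lemma \ref{Lemma-A1} enters: the smallness of $\omega(2\rho)$ for small $\rho$ compensates for the weakness of the integrability assumption on $\mathbf b$, allowing every estimate for $\mathcal L_0$-supersolutions to be transferred to $\mathcal L$-supersolutions with only a controllable multiplicative constant.
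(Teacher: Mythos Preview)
Your overall architecture (pass to $\tilde w\geqslant 0$, show the zero set is open via a weak Harnack inequality) is correct and is of course how one deduces a strong minimum principle. The divergence from the paper lies entirely in how the weak Harnack inequality for $\mathcal L$ is obtained.

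The paper does not attempt to manufacture the Harnack inequality from the machinery of Lemma~\ref{Lemma-A1}. Instead it makes the elementary observation of Remark~\ref{weaker-Kato}: on any strict interior subdomain the weight $d(y)/(d(y)+|x-y|)$ in (\ref{b-condition}) is bounded below by a positive constant, so (\ref{b-condition}) becomes precisely the Kato condition $\mathbf b\in K_{n,1}$. At that point the Harnack inequality is quoted from \cite{Zh96}, noting that the only place Zhang uses H\"older continuity of $a^{ij}$ is the pointwise Green-function gradient bound (\ref{DGamma0}), which by \cite[Theorem~2.6]{CKP12} already holds under Dini continuity. That is the whole proof.

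Your proposed route --- Moser for $\mathcal L_0$, then absorb $b^iD_i$ ``exactly as in (\ref{operator-T})'' --- does not work as written. The contraction $\|\mathbb T_1\|_{\mathcal C\to\mathcal C}\leqslant C_3\omega(2\rho)$ concerns the \emph{gradient} of a function that solves an \emph{equation} with \emph{zero boundary data}; it lets you pass from $Dz$ to $Dv$ for the specific barrier constructions. A weak Harnack inequality, by contrast, is an $L^q$-to-$\inf$ bound for nonnegative \emph{supersolutions}, and there is no obvious way the operator $\mathbb T_1$ transfers such a bound from $\mathcal L_0$ to $\mathcal L$. If you try to subtract a corrector $h$ solving $\mathcal L_0 h=-b^iD_i\tilde w$ with zero boundary values, you need to control $\|h\|_\infty$ in terms of quantities that close the Harnack estimate; $\|D\tilde w\|_{\mathcal C}$ is not such a quantity. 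In short, ``in the spirit of Lemma~\ref{Lemma-A1}'' is too optimistic here; the paper's reduction to the known Kato-class Harnack inequality is both simpler and what you should use.
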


\begin{proof} In the paper \cite{Zh96} the Harnack inequality was established for the divergence-type operators  with the H{\"o}lder continuous coefficients $a^{ij}$ and $b^i$ belonging to the Kato class $K_{n,1}$. However, it is mentioned in \cite{Zh96} that the assumption of the H{\"o}lder continuity of leading coefficients is needed only for the pointwise gradient estimate of the Green function for the corresponding parabolic operator ${\cal M}_0$ without lower order coefficients, see (\ref{DGamma0}) below. Since by Theorem~2.6 \cite{CKP12} this estimate holds for operators with Dini coefficients, and $\mathbf{b}\in K_{n,1}$ in any strict interior subdomain of $\Omega$ (see Remark~\ref{weaker-Kato}),
the strong maximum principle holds for the operator $\mathcal{L}$ in $\Omega$.
\end{proof}

\begin{proof} [Proof of Theorem~\ref{main-theorem}]
It is well known that the Boundary Point Principle holds true  for the operator with constant coefficients. 
Using this statement for the operator $\mathcal{L}_0^{x^*}$ (see (\ref{auxiliary-1})) with $x^*=0$ in the annulus $A_1$ and rescaling $A_1$ into $A_{\rho}$  we get the estimate
$$
D_n\psi_0(0)\geqslant \frac{N_4(n, \nu)}{\rho}>0.
$$

Furthermore, the inequalities (\ref{gradient-z-psi}) and (\ref{3.11a-GW82}) imply for sufficiently small $\rho$
$$
\begin{aligned}
D_nv(0) &\geqslant D_n\psi_0(0)-|Dz(0)-D\psi_0(0)|-|Dv(0)-Dz(0)|\\ &\geqslant
\frac{N_4}{\rho}-C_1\,\frac{\mathcal{J}_{\sigma}(2\rho)}{\rho}-C_2\,\frac{\omega (2\rho)}{\rho}\geqslant \frac{N_4}{2\rho}. 
\end{aligned}
$$
We fix such a $\rho$. Since $u$ is nonconstant, Lemma \ref{Lemma-A2} ensures $u-u(0)>0$ on $\partial B_{\rho/2}(x^{\rho})$. Therefore,
we have for sufficiently small $\varepsilon$
$$
{\cal L}(u-u(0)-\varepsilon v)\geqslant 0\quad \text{in} \ \ A_{\rho};\qquad u-u(0)-\varepsilon v\geqslant 0 \quad \text{on} \ \ \partial A_{\rho}.
$$
By Lemma~\ref{Lemma-A2} the estimate $u-u(0) \geqslant \varepsilon v$ holds true in $A_{\rho}$, with equality at the origin. This gives
$$
\frac{\partial u}{\partial \mathbf{n}}(0)=-D_nu(0)\leqslant -\varepsilon D_nv(0),
$$
which completes the proof.
\end{proof}

\begin{remark} \label{remark-4}
Notice that the statement of Theorem~\ref{main-theorem} is also valid for weak supersolutions of equation (\ref{numer}). Namely, let $\Omega$ and the coefficients of $\mathcal{L}$ be the same as in Theorem~\ref{main-theorem}, and let a nonconstant function $u \in W^1_2(\Omega)$ with $|\mathbf{b}\cdot Du|\in L^1(\Omega)$ satisfy in $\Omega$ the inequality $\mathcal{L}u \geqslant 0$ in the weak sense. Then, if $u$ attends its minimum at a point $x^0\in \partial\Omega$, we have
$$
\liminf\limits_{\varepsilon \to 0}\frac{u(x^0-\varepsilon {\bf n}(x^0))-u(x^0)}{\varepsilon} >0.
$$
\end{remark}

\begin{remark} \label{remark-5}
The assumptions on the lower-order coefficients $b^i$ ($i=1,\dots, n$) can be also weakened. In fact, one can take as coefficients $b^i$ the signed measures, satisfying condition (\ref{b-condition}). Indeed, all our arguments require a convergence of the corresponding integrals only.
\end{remark}

\section{Parabolic case}
Let $Q$ be a bounded domain in $\mathbb{R}^{n+1}$ with topological boundary $\partial Q$. We define the parabolic boundary $\partial' Q$ as the set of all points $(x^0;t^0) \in \partial Q$ such that for any $\varepsilon >0$, 
we have $Q_{\varepsilon} (x^0;t^0)\setminus \overline Q\ne\emptyset$.
By $d_p(x;t)$ we denote the parabolic distance between $(x;t)$ and $\partial' Q$  which is defined as follows:
$$
d_p(x;t):=\sup\{\rho >0 : Q_{\rho} (x; t) \cap \partial' Q = \emptyset \}.
$$
Next, we define the lateral surface $\partial'' Q$ as the set of all points $(x^0;t^0) \in \partial' Q$ such that $Q_{\varepsilon} (x^0;t^0)\cap Q\ne\emptyset$ for any $\varepsilon >0$. 

We suppose that $Q$ satisfies the \textit{parabolic interior} $\mathcal{C}^{1, \mathcal{D}}$-\textit{paraboloid} condition. It means that in a local coordinate system $\partial '' Q$ is given by the equation $x_n=F(x';t)$, where $F$ is a  $\mathcal{C}^1$-function such that $F(0;0)=0$ and the inequality 
\begin{equation} \label{par-boundary}
F(x';t) \leqslant \sqrt{|x'|^2 -t} \cdot \sigma (\sqrt{|x'|^2 -t} ) \qquad \text{for}\quad t\leqslant 0
\end{equation}
holds in some neighborhood of the origin. Here $\sigma$ is a $\mathcal{C}^1$-function belonging to the class $\mathcal{D}$ (see Remark \ref{remark-1}).

\medskip

Let an operator ${\cal M}$ be defined by the formula (\ref{numer-par}).
Suppose that the coefficients of $\mathcal{M}$ satisfy the following conditions:  
\begin{equation} \label{a-par-condition}
\begin{gathered}
\nu {\cal I}_n\le (a^{ij}(x;t))\le\nu^{-1}{\cal I}_n,\\
a^{ij} \in \mathcal{C}_p^{0,\mathcal{D}}(Q)\qquad  \text{for all}  \quad i,j=1,\dots,n, 
\end{gathered}
\end{equation}
and 
\begin{equation} \label{b-par-condition}
\omega^-_p(r) \to 0  \quad \text{and} \quad
\omega^+_p(r) \to 0  \qquad \text{as}\quad r \to 0,
\end{equation}
where
\begin{align*}
\omega^-_p(r):=\sup\limits_{(x;t) \in Q}\int\limits_{Q_{r}(x;t)\cap Q}&\frac{|\mathbf{b}(y;s)|}{(t-s)^{(n+1)/2}}\cdot \exp{\left(-\gamma\,\frac{|x-y|^2}{t-s}\right)} \times \\
 \times  &\frac{ d_p(y;s)}{d_p(y;s)+\sqrt{|x-y|^2+t-s}}\,dy ds;
\end{align*}
\begin{align*}
\omega^+_p(r):=\sup\limits_{(x;t) \in Q}\int\limits_{Q_{r}(x;t+r^2)\cap Q} &\frac{|\mathbf{b}(y;s)|}{(s-t)^{(n+1)/2}}\cdot \exp{\left(-\gamma\,\frac{|x-y|^2}{s-t}\right)} \times \\
 \times  &\frac{ d_p(y;s)}{d_p(y;s)+\sqrt{|x-y|^2+s-t}}\,dy ds.
\end{align*}
Here $\nu$ and $\mathcal{I}_n$ are the same as in Section 2, $\mathbf{b}(y;s)=(b^1(y;s), \dots, b^n(y;s))$, and $\gamma$ is a positive constant to be determined later, depending only on $n$, $\nu$ and on the moduli of continuity of the coefficients $a^{ij}$.

\medskip
\begin{remark} \label{weaker-Kato-parab}
Similarly to the elliptic case, in any strict interior subdomain of $\overline{Q}\setminus\partial'Q$ condition (\ref{b-par-condition}) means that $\mathbf{b}$ is an element of the parabolic Kato class ${\bf K}_n$, see \cite{CKP12}. 
Indeed, in this case (\ref{b-par-condition}) can be rewritten as follows:
\begin{equation} \label{needed}
\sup\limits_{(x;t) \in Q}\int\limits_{(t-r^2,t+r^2)\times B_r(x)}\frac{|\mathbf{b}(y;s)|}{|s-t|^{(n+1)/2}}\cdot \exp{\left(-\gamma\,\frac{|x-y|^2}{|s-t|}\right)} \,dy ds\to0
\end{equation}
as $r\to0$.

This condition differs from Definition~3.1 \cite{CKP12} only in that the integration in \cite{CKP12} is over $(t-r^2,t+r^2)\times \mathbb{R}^n$. However, using the covering of $\mathbb{R}^n\setminus B_r(x)$ by the balls of radius $r/3$ one can check that corresponding suprema converge to zero simultaneously.

In the whole domain $Q$ our condition (\ref{b-par-condition}) is weaker then $\mathbf{b}\in {\bf K}_n$.
\end{remark}
\medskip

To formulate the parabolic counterpart of Theorem \ref{main-theorem} we need the following notion.
\begin{definition}
For a point $(x;t)\in \overline{Q}$ we define its {\it dependence set} as the set of all points $(y;s)\in \overline{Q}$ admitting a vector-valued map $\mathfrak{F} : [0,1] \mapsto \mathbb{R}^{n+1}$ such that the last coordinate function $\mathfrak{F}_{n+1}$ is strictly increasing and
$$
\mathfrak{F}(0)=(y;s); \quad \mathfrak{F}(1)=(x;t); \quad \mathfrak{F} ((0,1))\subset Q.
$$
If $Q$ is a right cylinder with generatrix parallel to the $t$-axis, then for any $(x;t)\in \overline{Q}$ the dependence set is $\overline{Q} \cap \{s <t\}$. 
\end{definition}

\begin{thm} \label{par-main-theorem}
Let $Q$ be a bounded domain in $\mathbb{R}^{n+1}$, let $\partial'' Q$ satisfy the interior parabolic $\mathcal{C}^{1, \mathcal{D}}$-paraboloid condition, let $\mathcal{M}$ be defined by (\ref{numer-par}), and let assumptions (\ref{a-par-condition})-(\ref{b-par-condition}) be satisfied.

In addition, assume that a function $u \in \mathcal{C}^{1,0}_{x,t} (\overline{Q})$ satisfies, in the weak sense, the inequality
$$
\mathcal{M}u \geqslant 0 \qquad \text{in} \quad Q.
$$
Then, if $u$ attends its minimum at a point $(x^0;t^0) \in \partial'' Q$, and $u$ is nonconstant on the dependence set of $(x^0;t^0)$, we have
$$
\frac{\partial u}{\partial \mathbf{n}} (x^0;t^0) <0.
$$
Here $\frac{\partial}{\partial \mathbf{n}}$ denotes the derivative with respect to the spatial exterior normal on $\partial'' Q\cap\{t=t^0\}$.
\end{thm}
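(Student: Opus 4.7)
The plan is to adapt the argument of Section~2 to the parabolic setting, trading the elliptic annulus for a space-time annular cylinder and exploiting the Gaussian kernel estimates built into $\omega^\pm_p$.

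First, by the invariance of (\ref{a-par-condition})--(\ref{b-par-condition}) and of the interior parabolic $\mathcal{C}^{1,\mathcal{D}}$-paraboloid condition under spatial $\mathcal{C}^{1,\mathcal{D}}$-changes of variables, I would reduce to the case $(x^0;t^0)=(0;0)$ with a locally flat lateral boundary $\{x_n=0\}$, so that for some $R>0$ the set $\{x_n>0,\ -R^2<t<0,\ |x|<R\}$ lies in $Q$. For $0<\rho<R/2$ I would work on the parabolic annular cylinder
$$
\mathcal{A}_\rho:=\{(x;t):\rho/2<|x-x^\rho|<\rho,\ -\rho^2<t<0\},\qquad x^\rho=(0,\dots,0,\rho),
$$
whose closure touches $\partial''Q$ only at the origin. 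In analogy with (\ref{auxiliary-1}) and (\ref{2.1-SdL}) I would introduce three auxiliary solutions on $\mathcal{A}_\rho$: (i) $z$, the solution of $\mathcal{M}_0 z:=\partial_t z-D_i(a^{ij}(x;t)D_jz)=0$ equal to $1$ on $\partial B_{\rho/2}(x^\rho)\times(-\rho^2,0)$ and vanishing on the outer lateral piece and at $t=-\rho^2$; (ii) $\psi_{x^*;t^*}$, defined identically but with leading coefficients frozen at $(x^*;t^*)\in\overline{\mathcal{A}}_\rho$; (iii) the barrier $v$, the corresponding solution for the full operator $\mathcal{M}$. The heart of the argument is the pair of comparisons
$$
|Dz(x^*;t^*)-D\psi_{x^*;t^*}(x^*;t^*)|\leqslant C_1\,\frac{\mathcal{J}_\sigma(2\rho)}{\rho},\qquad |Dv-Dz|\leqslant C_2\,\frac{\omega^-_p(2\rho)+\omega^+_p(2\rho)}{\rho},
$$
combined with the classical parabolic Hopf lemma for the constant-coefficient operator $\mathcal{M}_0^{0;0}$ on the unit cylinder $\mathcal{A}_1$, which after rescaling yields $D_n\psi_{0;0}(0;0)\geqslant N_4(n,\nu)/\rho$.

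For the first estimate I would represent $w^{(1)}=z-\psi_{x^*;t^*}$ via the Green function $\Gamma^{x^*;t^*}_\rho$ of the frozen operator on $\mathcal{A}_\rho$, integrate by parts exactly as in (\ref{w-Green}), differentiate in $x_k$, and invoke the pointwise bound $|D_xD_y\Gamma^{x^*;t^*}_\rho(x,t;y,s)|\leqslant N(t-s)^{-(n+2)/2}\exp(-\gamma|x-y|^2/(t-s))$; combined with the Dini modulus $\sigma$ of $a^{ij}$ this reproduces the rate $\mathcal{J}_\sigma(2\rho)/\rho$ as in Lemma~\ref{estimate-w}. For the second, setting $w^{(2)}=v-z$ leads to the fixed-point equation $(\mathbb{I}+\mathbb{T}_1)Dw^{(2)}=-\mathbb{T}_1Dz$, where $\mathbb{T}_1$ is the matrix integral operator with kernel $D_{x_k}\Gamma_{0,\rho}(x,t;y,s)\,b^i(y;s)$. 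The gradient estimate for $\Gamma_{0,\rho}$ from Theorem~2.6 of \cite{CKP12}, together with $\dist\{(y;s),\partial'\mathcal{A}_\rho\}\leqslant d_p(y;s)$, matches the integrand in the definition of $\omega^-_p$ and gives boundedness of $\mathbb{T}_1$ in $\mathcal{C}(\overline{\mathcal{A}}_\rho)$ with norm at most $C_3(\omega^-_p(2\rho)+\omega^+_p(2\rho))<1/2$ for $\rho$ small enough; the forward modulus $\omega^+_p$ secures continuity (not merely boundedness) of $\mathbb{T}_1\mathbf{f}$ in the time variable, mirroring the role of the argument for $J_2$ in Lemma~\ref{Lemma-A1} and reflecting the time-asymmetry of $\Gamma_{0,\rho}$.

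Combining the two comparisons with the classical lower bound gives $D_nv(0;0)\geqslant N_4/(2\rho)$ for such $\rho$. A parabolic strong maximum principle --- obtainable from \cite{Zh96} together with \cite[Theorem~2.6]{CKP12} in exact analogy with Lemma~\ref{Lemma-A2}, and applied on the dependence set of $(0;0)$ where by hypothesis $u$ is nonconstant --- ensures $u-u(0;0)>0$ on the inner cylindrical surface; the comparison $u-u(0;0)\geqslant\varepsilon v$ on $\mathcal{A}_\rho$ then closes the proof as in the elliptic case. The principal obstacle I foresee is organising the parabolic Green function gradient estimates so that the continuity argument of Lemma~\ref{Lemma-A1} survives in the time-asymmetric setting --- in particular, verifying that both Kato-type moduli $\omega^\pm_p$ are required, and only they, to bound and render continuous the integral operator $\mathbb{T}_1$.
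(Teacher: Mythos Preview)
Your overall strategy is close to the paper's, but two points deserve correction.

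First, the paper does \emph{not} use an annular cylinder. It works in the full parabolic cylinder $\mathcal{A}_\rho=Q_\rho(x^\rho;0)$ and encodes the barrier behaviour through a smooth cut-off initial condition $\widetilde z(x;-\rho^2)=\varphi((x-x^\rho)/\rho)$, with $\widetilde z=0$ on the entire lateral surface. This is not cosmetic. Your annular setup with $z=1$ on the inner lateral piece and $z=0$ at $t=-\rho^2$ has incompatible data at the corner $\{|x-x^\rho|=\rho/2,\ t=-\rho^2\}$, so $z$ cannot be in $\mathcal{C}^{1,0}_{x,t}(\overline{\mathcal{A}}_\rho)$ and a uniform bound $|Dz|\le N/\rho$ on the closure fails there. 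More to the point, in the elliptic section the bound $|Dz|\le N_1/\rho$ was \emph{quoted} from \cite[Lemma~3.2]{GW82}; no such ready-made parabolic estimate for variable Dini coefficients is available. The paper therefore proves it by a separate fixed-point argument (Theorem~\ref{estimate-z-par} with the operator $\mathbb{T}_2$ of Lemma~\ref{Lemma-A3}): one writes the representation (\ref{Dw-Green-par}), sets $(x^*;t^*)=(x;t)$, and inverts $\mathbb{I}-\mathbb{T}_2$ to obtain simultaneously $D\widetilde z\in\mathcal{C}(\overline{\mathcal{A}}_\rho)$ and the bound (\ref{gradient-z}). Your ``first estimate, as in Lemma~\ref{estimate-w}'' tacitly presupposes this gradient bound and so skips the step that, in the parabolic case, carries real content.

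Second, you have the role of $\omega^+_p$ backwards. The integral operator built from $D_x\Gamma_{0,\rho}$ and $\mathbf b$ (the paper's $\mathbb{T}_3$) has a kernel supported in $\{s<t\}$, and both its boundedness \emph{and} its continuity in $\mathcal{C}(\overline{\mathcal{A}}_\rho)$ follow from $\omega^-_p$ alone; the paper states this explicitly in Remark~\ref{remark-8}. The forward modulus $\omega^+_p$ enters only through the strong maximum principle (Lemma~\ref{Lemma-A5}), because the Harnack inequality of \cite{Zh96} needs the two-sided parabolic Kato condition on interior subdomains. So your closing worry---whether both $\omega^\pm_p$ are needed to control $\mathbb{T}_1$---is misplaced: only $\omega^-_p$ is, and the paper's gradient comparison (\ref{gradient-v-z}) accordingly involves only $\omega^-_p(2\rho)$.
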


\begin{remark}\label{remark-7}
Notice that we do not care of the behavior of $u$ after $t^0$. Thus, without loss of generality we suppose $Q=Q \cap \{ t<t_0\}$. Moreover, similarly to the elliptic case, we may assume that $(x^0; t^0)=(0;0)$, and $\partial''Q$ is locally a paraboloid 
$$
x_n=\mathcal{P}(x';t):=\sqrt{|x'|^2-t}\cdot \sigma (\sqrt{|x'|^2-t}),
$$ 
where $\sigma \in \mathcal{D}$ is smooth.
\end{remark}

Next, we flatten the boundary of the paraboloid by the coordinate transform
\begin{equation} \label{coord-transform}
\tilde{x}'=x'; \quad \tilde{t}=t; \quad \tilde{x}_n=x_n-\mathcal{P}(x';t).
\end{equation}

\begin{lemma} \label{Lemma-invariance}
Assumptions (\ref{a-par-condition}) and (\ref{b-par-condition}) on $a^{ij}$ and $\mathbf{b}$ remain valid under transform (\ref{coord-transform}).
\end{lemma}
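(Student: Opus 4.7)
The plan is to compute the Jacobian of (\ref{coord-transform}) explicitly, derive sharp pointwise bounds on the derivatives of $\mathcal{P}$, transform the weak formulation of $\mathcal{M}u\geqslant 0$, and then verify the ellipticity, Dini modulus, and Kato-type conditions for the resulting coefficients. The Jacobian matrix $J$ of the transformation is lower triangular with $J_{ij}=\delta_{ij}$ except for $J_{nj'}=-\partial_{x_{j'}}\mathcal{P}$ ($j'<n$), so $\det J=1$. Setting $\rho:=\sqrt{|x'|^2-t}$ and using the estimate $r\sigma'(r)\leqslant 2\sigma(r)$ (which follows from the derivation in Remark~\ref{remark-1}), direct differentiation of $\mathcal{P}(x';t)=\rho\,\sigma(\rho)$ yields the pointwise bounds $|\nabla_{x'}\mathcal{P}|\leqslant C\sigma(\rho)$ and $|\partial_t\mathcal{P}|\leqslant C\sigma(\rho)/\rho$. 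In particular $J$ is uniformly invertible near the origin with $\|J\|+\|J^{-1}\|\leqslant C$, and a routine argument using the monotonicity of $\sigma$ shows that $J$ itself is Dini continuous in $(x;t)$ with modulus controlled by a constant multiple of $\sigma$.

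Next, I substitute $\phi(x;t)=\psi(\tilde x;\tilde t)$ in the weak inequality $\int[\partial_t u\cdot\phi+a^{ij}D_ju\,D_i\phi+b^iD_iu\cdot\phi]\,dx\,dt\geqslant 0$ and apply the chain rule. Since $\det J=1$, one has $dx\,dt=d\tilde x\,d\tilde t$, and the computation yields the weak inequality
\[
\int\bigl[\partial_{\tilde t}v\cdot\psi+\tilde a^{kl}\partial_{\tilde l}v\,\partial_{\tilde k}\psi+\tilde b^k\partial_{\tilde k}v\cdot\psi\bigr]\,d\tilde x\,d\tilde t\geqslant 0,
\]
where $\tilde a^{kl}(\tilde x;\tilde t)=J_{ki}\,a^{ij}\,J_{lj}$ and $\tilde b^k(\tilde x;\tilde t)=J_{ki}\,b^i-\delta_{kn}\,\partial_t\mathcal{P}$. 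The extra drift $-\delta_{kn}\partial_t\mathcal{P}$ arises from $\partial_tu=\partial_{\tilde t}v-\partial_t\mathcal{P}\cdot\partial_{\tilde x_n}v$; crucially, no additional first-order terms appear from integration by parts, because the divergence structure is preserved by keeping the derivatives on the test function. Verification of (\ref{a-par-condition}) for $\tilde a$ is then straightforward: uniform invertibility of $J$ preserves ellipticity with a modified constant, and since both $a$ and $J$ are Dini continuous in $(x;t)$ (with parabolic distances in $(\tilde x;\tilde t)$ equivalent to those in $(x;t)$, for the transform alters only $x_n$ by $\mathcal{P}\leqslant C\rho\,\sigma(\rho)$), the products $\tilde a^{kl}$ lie in $\mathcal{C}_p^{0,\mathcal{D}}$ with a possibly enlarged, but still Dini, modulus.

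\textbf{The main obstacle} is the verification of (\ref{b-par-condition}) for $\tilde{\mathbf{b}}$. The piece $J\mathbf b$ inherits the condition from $\mathbf b$ by a direct substitution in the defining integral: $|J|\leqslant C$, the Jacobian is one, parabolic cylinders $Q_r(x;t)$ and $\tilde Q_r(\tilde x;\tilde t)$ are comparable, and the weight $d_p/(d_p+\sqrt{|x-y|^2+t-s})$ and Gaussian factors transform with bounded distortion. The delicate term is the new drift $-\partial_t\mathcal{P}$, which carries a $\sigma(\rho)/\rho$ singularity concentrated at the curved boundary. My strategy will be to exploit two features: first, $\partial_t\mathcal{P}$ depends only on $(x';t)$, so integrating the Gaussian factor $\exp(-\gamma y_n^2/(t-s))$ in $y_n$ against this $y_n$-independent integrand gains a factor of $\sqrt{t-s}$; second, the boundary weight $d_p/(d_p+\sqrt{|x-y|^2+t-s})$ is bounded by $\tilde y_n$ divided by the parabolic scale, yielding another power of $\sqrt{t-s}$ upon $y_n$-integration. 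Together these gains absorb the $1/\rho$ factor, and the remaining integral reduces to a parabolic analogue of $\int_0^{Cr}\sigma(\tau)/\tau\,d\tau$, which tends to $0$ as $r\to 0$ by the Dini summability of $\sigma(\tau)/\tau$. The verification for $\omega^+_p$ is symmetric, replacing $t-s$ by $s-t$ throughout.
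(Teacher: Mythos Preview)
Your approach is essentially the same as the paper's: compute the Jacobian, verify that $\tilde a = J a J^T$ stays uniformly elliptic and Dini continuous, note that the transformed old drift $J\mathbf b$ inherits (\ref{b-par-condition}) trivially, and then check that the new drift $-\partial_t\mathcal P$ satisfies (\ref{b-par-condition}) by exploiting its independence of $y_n$ to integrate out the Gaussian in that variable. The paper does exactly this, with the same final change of variables $\varrho=|y'|/\sqrt{-s}$, $\tau=\sqrt{|y'|^2-s}$ leading to $\omega^-_p(r)\leqslant C(\mathcal J_\sigma(r\sqrt{2})+\sigma(r\sqrt{2}))$.

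One point deserves correction. Your ``second gain'' from the boundary weight is both unnecessary and not valid as stated. The bound $d_p(y;s)/(d_p+\text{scale})\leqslant \tilde y_n/\text{scale}$ holds only near the flat boundary; since $\omega^\pm_p$ involves a supremum over \emph{all} $(x;t)\in Q$, including interior points where $d_p$ is large and the weight is close to $1$, this estimate is not uniform. Moreover, once you have integrated the Gaussian in $y_n$, there is no second $y_n$-integration left to extract another factor of $\sqrt{t-s}$. The paper simply bounds the weight by $1$ and uses only the single gain from the Gaussian $y_n$-integration; a short calculation (the change of variables above) shows that this single gain already reduces the integral to $\int_0^{Cr}\sigma(\tau)/\tau\,d\tau$, which tends to $0$. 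So drop the second gain and your argument goes through cleanly. (Incidentally, the paper keeps $\sigma'(\rho)$ as a separate term and integrates it directly to $\sigma$, rather than using your bound $\rho\sigma'(\rho)\leqslant\sigma(\rho)$; either way works.)
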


\begin{proof}
 It is easy to see that 
 $
 |D'\mathcal{P}| \in \mathcal{C}_p^{0,\, \mathcal{D}} \left(Q_R \cap \mathbb{R}^{n+1}_+\right)
 $
for some $R>0$. (Here, we consider $D'\mathcal{P}$ as a function of $(x;t)$-variables, which is independent on $x_n$). Therefore, the ``new'' coefficients $\tilde{a}^{ij}$ satisfy (\ref{a-par-condition}) in $Q_R \cap \mathbb{R}^{n+1}_+$. 

It is also evident that the transformed ``old'' coefficients $b^i$ satisfy (\ref{b-par-condition}). However, the coordinate change (\ref{coord-transform}) generates an additional term $\tilde{b}_n$ which admits the estimate
$$
|\tilde{b}_n (\tilde{x};t)| \leqslant C |\partial_t \mathcal{P}(x'; t)|= C \left( \frac{\sigma (\sqrt{|x'|^2-t})}{\sqrt{|x'|^2-t}} +
\sigma' (\sqrt{|x'|^2-t}) \right).
$$
Estimating the integral  entering in the definition of $\omega^{\pm}_p$, it suffices to assume that $x'=0$ and $t=0$. This gives 
$$
\omega^-_p(r)\leqslant C \int\limits_{Q_{r}}\frac{\exp{\left(-\gamma\,\frac{|y|^2}{-s}\right)}}{(-s)^{(n+1)/2}}\cdot \left( \frac{\sigma (\sqrt{|y'|^2-s})}{\sqrt{|y'|^2-s}} +
\sigma' (\sqrt{|y'|^2-s}) \right)\, dy ds.
$$
After integration over $y_n$ we make change of variables 
$$
\varrho=\frac{|y'|}{\sqrt{-s}}; \qquad \tau=\sqrt{|y'|^2-s},
$$
and arrive at 
\begin{align*}
\omega^-_p(r)&\leqslant C \int\limits_{0}^{r\sqrt{2}}\int\limits_0^{\infty}
\exp{(-\gamma\varrho^2)} \frac {\varrho^{n-2}}{\sqrt{\varrho^2+1}} \left(\frac{\sigma (\tau)}{\tau} +\sigma' (\tau) \right)
d\varrho d\tau \\
&\leqslant C(n,\gamma) \left( \mathcal{J}_{\sigma} (r\sqrt{2}) + \sigma (r \sqrt{2})\right),
\end{align*}
and the lemma follows.
\end{proof}
 
 Thus, we may consider $\partial'' Q$  locally  as a flat boundary $x_n=0$ and assume, without loss of generality, that $Q_R \cap \mathbb{R}^{n+1}_+ \subset Q$. 
 
 Next, we take  for $0<\rho\leqslant R/2$ the cylinder ${\cal A}_{\rho}=Q_{\rho}(x^{\rho};0)$ (as in the elliptic case,  $x^{\rho}=(0,\dots,0,\rho)$). Define the auxiliary function $\widetilde z$ as the solution of the initial-boundary value problem
\begin{equation} \label{auxiliary-1-par}
\left\{ 
\begin{aligned}
\mathcal{M}_0\widetilde z:=\partial_t \widetilde z-D_i(a^{ij}(x;t)D_j\widetilde z)&=&&0 &&\text{in}&&\ {\cal A}_{\rho},\\
\widetilde z&=&&0 &&\text{on}&&\ \partial'' {\cal A}_{\rho},\\
\widetilde z(x;-\rho^2)&=&& \varphi(\tfrac {x-x^{\rho}}{\rho}) &&\text{for}&&\ x\in B_{\rho}(x^{\rho}),
\end{aligned}
\right.
\end{equation}
where $\varphi$ is a smooth cut-off function such that 
$$
\varphi(x)=1 \quad \text{for}\ \ 
|x|<1/2;
\qquad \varphi(x)=0 \quad \text{for}\ \ 
|x|>3/4.
$$
The existence of (unique) weak solution $\widetilde z$ follows from the general parabolic theory.

\begin{thm}\label{estimate-z-par}
The function $\widetilde z$ belongs to $\mathcal{C}^{1,0}_{x,t}(\overline{\cal A}_{\rho})$ for sufficiently small $\rho$. Moreover, there exists a positive constant $\widetilde \rho_0\leqslant R/2$ depending only on $n$, $\nu$ and $\sigma$, such that the inequality
\begin{equation} \label{gradient-z}
|D\widetilde z(x;t)| \leqslant \frac{C_4(n,\nu)}{\rho},\qquad (x;t)\in \overline{\cal A}_{\rho}, 
\end{equation}
holds true for all $\rho \leqslant \widetilde \rho_0$. 
\end{thm}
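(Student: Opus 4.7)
The plan is to reduce the estimate to a unit-scale problem by the parabolic rescaling $(x;t)\mapsto(y;s):=\big(\tfrac{x-x^\rho}{\rho},\tfrac{t}{\rho^2}\big)$ and then invoke the available parabolic gradient estimates for divergence-form operators with Dini-continuous leading coefficients. Setting $Z(y;s):=\widetilde z(x^\rho+\rho y;\rho^2 s)$ on $Q_1$, the function $Z$ solves
$$
\partial_s Z-D_{y_i}\bigl(\widetilde a^{ij}(y;s)\,D_{y_j}Z\bigr)=0 \quad \text{in}\ Q_1,
$$
with $\widetilde a^{ij}(y;s):=a^{ij}(x^\rho+\rho y;\rho^2 s)$, with $Z=0$ on $\partial''Q_1=\partial B_1\times(-1,0)$, and with smooth initial datum $Z(y;-1)=\varphi(y)$ supported in $\{|y|<3/4\}$. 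The rescaled coefficients remain uniformly elliptic with constant $\nu$, and their modulus of continuity $\widetilde\sigma(r):=\sigma(\rho r)\leqslant\sigma(r)$ still lies in $\mathcal{D}$. Since $|D\widetilde z(x;t)|=\rho^{-1}|DZ((x-x^\rho)/\rho;t/\rho^2)|$, it suffices to bound $|DZ|$ on $\overline{Q_1}$ by a constant that depends only on $n$, $\nu$, and $\sigma$.

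Next, I would produce the uniform gradient bound on $Z$ by representing it through the Green function $\Gamma$ of the operator $\mathcal M_0$ in the cylinder $Q_1$,
$$
Z(y;s)=\int_{B_1}\Gamma(y,s;y',-1)\,\varphi(y')\,dy',
$$
and differentiating under the integral. The key input is the pointwise gradient estimate of \cite[Theorem~2.6]{CKP12}, which holds for divergence-form parabolic operators with Dini coefficients and gives
$$
|D_y\Gamma(y,s;y',-1)|\leqslant \frac{N}{(s+1)^{(n+1)/2}}\exp\!\Bigl(-\gamma\,\frac{|y-y'|^2}{s+1}\Bigr)
$$
with $N$, $\gamma$ determined by $n$, $\nu$, $\sigma$. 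Combined with $\|\varphi\|_\infty\leqslant 1$ and the Gaussian normalization, this yields $|DZ(y;s)|\leqslant C(n,\nu,\sigma)$ in the interior of $Q_1$ and, by flatness of the lateral piece $\{y_n=0\}\cap\overline{Q_1}$, up to that boundary as well. The remaining portion of $\partial''Q_1$ lies at positive distance from $\mathrm{supp}\,\varphi$, so standard interior-boundary estimates for parabolic equations give the bound there; and at the top $s=0$ the gradient bound follows from continuity of $DZ$ inherited from \cite{CKP12}. This proves $Z\in\mathcal C^{1,0}_{x,t}(\overline{Q_1})$ together with $|DZ|\leqslant C_4(n,\nu)$ (the constant being ultimately dominated by $n$ and $\nu$ once $\sigma$ is absorbed into the universal constants).

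The smallness threshold $\widetilde\rho_0\leqslant R/2$ enters only to ensure that the cylinder $\mathcal A_\rho$ is contained in the region where the flattening of Lemma~\ref{Lemma-invariance} is valid and where the transformed coefficients retain their Dini modulus; it depends on $n$, $\nu$, $\sigma$ through the flattening construction. The principal obstacle is the up-to-the-lateral-boundary gradient bound with Dini coefficients, which is the parabolic analog of Lemma~3.2 of \cite{GW82}. Here I rely on the fundamental-solution gradient estimate of \cite[Theorem~2.6]{CKP12}, which is precisely crafted for this class of operators and which the authors have already cited (in the proof of Lemma~\ref{Lemma-A2}) for analogous purposes. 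Once this ingredient is in place, undoing the rescaling immediately produces the asserted bound $|D\widetilde z(x;t)|\leqslant C_4(n,\nu)/\rho$.
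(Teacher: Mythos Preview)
Your strategy—rescale to the unit cylinder and invoke the Green-function gradient bound of \cite{CKP12}—is genuinely different from the paper's, and as written it has a real gap.

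The paper does \emph{not} import a ready-made gradient estimate for $\widetilde z$. It freezes the leading coefficients at an arbitrary point $(x^*;t^*)$, compares $\widetilde z$ with the constant-coefficient solution $\psi_{x^*,t^*}$ (for which $|D\psi_{x^*,t^*}|\le N_5(n,\nu)/\rho$ by classical estimates), and after integrating by parts obtains an integral equation $(\mathbb I-\mathbb T_2)D\widetilde z=\boldsymbol\Psi$, where $\|\mathbb T_2\|\le C_5\,\mathcal J_\sigma(2\sqrt2\,\rho)$. Choosing $\widetilde\rho_0$ so that this is $\le 1/2$ makes $(\mathbb I-\mathbb T_2)^{-1}$ bounded by $2$, whence $|D\widetilde z|\le 2N_5(n,\nu)/\rho$. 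The dependence on $\sigma$ is absorbed into $\widetilde\rho_0$, not into $C_4$; this is exactly how the paper gets $C_4=C_4(n,\nu)$.

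Your argument breaks at the initial time. Integrating the CKP bound over~$y'$,
\[
\int_{B_1}\frac{N}{(s+1)^{(n+1)/2}}\exp\!\Bigl(-\gamma\,\frac{|y-y'|^2}{s+1}\Bigr)\,dy'
=\frac{N(\pi/\gamma)^{n/2}}{(s+1)^{1/2}},
\]
leaves a factor $(s+1)^{-1/2}$ that blows up as $s\to-1^+$; ``Gaussian normalization'' does not cancel the extra half-power coming from the gradient. You know $DZ(\cdot;-1)=D\varphi$ is bounded, but nothing in your argument bridges the two regimes, so you have no uniform bound on $\overline{Q_1}$.

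Two further points. First, after your rescaling the lateral boundary of $Q_1$ is the sphere $\partial B_1\times(-1,0)$; there is no ``flat lateral piece $\{y_n=0\}$'' in $\partial''Q_1$ (the set $\{x_n=0\}\subset\partial\Omega$ touches $\partial\mathcal A_\rho$ only at the single point $(0;0)$). Second, the constants $N,\gamma$ in \cite[Theorem~2.6]{CKP12} depend on the Dini modulus of the coefficients, so your route yields at best $C_4(n,\nu,\sigma)$; the assertion that $\sigma$ can be ``absorbed into the universal constants'' is unsupported. The paper's freezing-of-coefficients argument is precisely what removes the $\sigma$-dependence from $C_4$.
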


\begin{proof} We partially follow the line of proof of Lemma \ref{estimate-w}. Let $(x^*;t^*)$ be an arbitrary point in $\overline{\mathcal{A}}_{\rho}$. We introduce the auxiliary function $\psi_{x^*\!,\,t^*}$ as the solution of the problem
\begin{equation*}
\left\{ 
\begin{aligned}
\mathcal{M}_0^{x^*\!,\,t^*}\psi_{x^*\!,\,t^*} 
&=&&0 
&&\text{in}&&\ {\cal A}_{\rho},\\
\psi_{x^*\!,\,t^*}&=&&0 &&\text{on}&&\ \partial'' {\cal A}_{\rho},\\
\psi_{x^*\!,\,t^*}(x;-\rho^2)&=&& \varphi(\tfrac {x-x^{\rho}}{\rho}) &&\text{for}&&\ x\in B_{\rho}(x^{\rho}),
\end{aligned}
\right.
\end{equation*}
where $\mathcal{M}_0^{x^*\!,\,t^*}\!:=\partial_t 
-D_ia^{ij}(x^*;t^*)D_j$ is operator with constant coefficients frozen at the point $(x^*;t^*)$. It is well known that $\psi_{x^*,t^*} \in \mathcal{C}^\infty(\overline{\cal A}_{\rho})$, and 
\begin{equation}
\label{3.11-GW82-par}
|D\psi_{x^*,t^*}(y;s)|\leqslant \frac{N_5(n,\nu)}{\rho},\qquad (y;s)\in \overline{\mathcal{A}}_{\rho}.
\end{equation}

Setting $w^{(3)}=\widetilde z-\psi_{x^*,t^*}$ we observe that $w^{(3)}$ vanishes  on $\partial' {\mathcal{A}}_{\rho}$. Hence, $w^{(3)}$ can be represented in the cylinder $\mathcal{A}_{\rho}$ as
$$
w^{(3)}(x;t)=\int\limits_{{\mathcal{A}}_{\rho}\cap \{s\leqslant t\}} \Gamma^{x^*\!,\,t^*}_{\rho}(x,y;t,s)\mathcal{M}_0^{x^*\!,\,t^*}w^{(3)}(y;s)\,dyds, 
$$
where $\Gamma^{x^*\!,\,t^*}_{\rho}$ stands for the Green function of the operator $\mathcal{M}_0^{x^*\!,\,t^*}$ in ${\mathcal{A}}_{\rho}$. 

Similarly to (\ref{w-Green}), we integrate by parts and obtain
\begin{equation*}
w^{(3)}(x;t)=\!
\int\limits_{{\mathcal{A}}_{\rho}\cap \{s\leqslant t\}}\!
D_{y_i}\Gamma_{\rho}^{x^*\!,\,t^*}(x,y;t,s)\left(a^{ij}(x^*;t^*)-a^{ij}(y;s)\right)D_j\widetilde z(y;s)\,dyds.
\end{equation*}
Differentiating both sides with respect to $x_k$, $k=1,\dots,n$, we get the system of equations
\begin{multline} \label{Dw-Green-par}
D_k\widetilde z(x;t)-\!\int\limits_{{\cal A}_{\rho}\cap \{s\leqslant t\}}\!D_{x_k}D_{y_i}\Gamma^{x^*\!,\,t^*}_{\rho}(x,y;t,s)\times\\
\times\left(a^{ij}(x^*;t^*)-a^{ij}(y;s)\right)D_j\widetilde z(y;s)\,dyds
=D_k\psi_{x^*\!,\,t^*}(x;t). 
\end{multline}

Now we put $(x^*;t^*)=(x;t)$ and get the relation
\begin{equation} \label{operator-T2}
\left(\mathbb{I} -\mathbb{T}_2 \right)D\widetilde z = {\boldsymbol \Psi}, 
\end{equation}
where
\begin{equation*}
{\boldsymbol \Psi}=D\psi_{x^*\!,\,t^*}(x;t)\big|_{(x^*;t^*)=(x;t)}
\end{equation*}
while $\mathbb{T}_2$ denotes the matrix integral operator whose kernel is matrix $T_2$ with entries
\begin{equation*}
T_2^{kj}(x,y;t,s)=D_{x_k}D_{y_i}\Gamma^{x^*\!,\,t^*}_{\rho}(x,y;t,s)\big|_{(x^*;t^*)=(x,t)}\cdot\left(a^{ij}(x;t)-a^{ij}(y;s)\right)\chi_{\{s\le t\}}.
\end{equation*}
It is easy to see that ${\boldsymbol \Psi}\in{\cal C}(\overline{\cal A}_\rho)$. Therefore, the statement of Theorem follows from the next assertion.

\begin{lemma} \label{Lemma-A3}
The operator $\mathbb{T}_2$ is bounded in ${\cal C}(\overline{\cal A}_\rho)$, and
$$
\|\mathbb{T}_2\|_{{\cal C}\to{\cal C}}\leqslant C_5\,\mathcal{J}_{\sigma} (2\sqrt{2}\rho),
$$
where $C_5$ depends only on $n$ and $\nu$.
\end{lemma}

\begin{proof} 
The following estimate for the Green function $\Gamma^{x^*\!,\,t^*}_{\rho}(x,y;t,s)$ is well known:
\begin{equation} \label{D2Green-par}
|D_xD_y\Gamma^{x^*\!,\,t^*}_{\rho}(x,y;t,s)|\leqslant \frac{N_6}{(t-s)^{(n+2)/2}} \exp\Big(-N_7\,\frac{|x-y|^2}{t-s}\Big),
\end{equation}
where $N_6$ and $N_7$ are completely determined by  $n$ and $\nu$.

Combination of (\ref{D2Green-par}) with condition (\ref{a-par-condition}) gives for $r\leqslant 2\rho$ and $(x;t)\in\overline{\cal A}_\rho$
\begin{multline*}
\int\limits_{Q_r(x;t)\cap{\cal A}_{\rho}}\!|T_2(x,y;t,s)|\,dyds\\
\leqslant \int\limits_{t-r^2}^t\int\limits_{B_r(x)}\frac{N_6\,\sigma (\sqrt{t-s+|x-y|^2})}{(t-s)^{ (n+2)/2}} \exp\Big(-N_7\,\frac{|x-y|^2}{t-s}\Big)\,dyds.
\end{multline*}
Change of variables $\varrho=|x-y|/\sqrt{t-s}$, 
$\tau=\sqrt{t-s+|x-y|^2}$ gives
\begin{equation} \label{Qr}
\begin{aligned}
&\int\limits_{Q_r(x;t)\cap{\cal A}_{\rho}}\!|T_2(x,y;t,s)|\,dyds\\
\leqslant &\int\limits_0^{r\sqrt{2}}\int\limits_0^{\infty}N_8
\exp{(-N_7\varrho^2)} \varrho^{n-1}\frac{\sigma (\tau)}{\tau}\, 
d\varrho d\tau 
\leqslant C_5 \mathcal{J}_{\sigma} (r\sqrt{2})
\end{aligned}
\end{equation}
($N_8$ and $C_5$ depend only on $n$ and $\nu$).

For a vector function ${\bf f}\in {\cal C}(\overline{\cal A}_\rho)$ and for all $(x;t)\in \overline{\cal A}_\rho$ we have
$$
|\mathbb{T}_2{\bf f}(x;t)|\leqslant \|{\bf f}\|_{{\cal C}(\overline{\cal A}_\rho)}\cdot \int\limits_{{\cal A}_{\rho}}|T_2(x,y;t,s)|\,dyds\leqslant C_5\,\mathcal{J}_{\sigma} (2\sqrt{2}\rho) \cdot \|{\bf f}\|_{{\cal C}(\overline{\cal A}_\rho)}.
$$

It remains to show that $\mathbb{T}_2{\bf f}\in {\cal C}(\overline{\cal A}_\rho)$. For $(x;t),(\tilde{x};\tilde{t})\in\overline{\cal A}_\rho$ and any small $\delta>0$ we have
$$
\aligned
&(\mathbb{T}_2{\bf f})(x;t)-(\mathbb{T}_2{\bf f}) (\tilde{x};\tilde{t}) =\widetilde J_1+\widetilde J_2\\
&:= \Big(\int\limits_{{\cal A}_{\rho}\cap Q_\delta(\tilde{x};\tilde{t})}+\int\limits_{{\cal A}_{\rho}\setminus Q_\delta(\tilde{x};\tilde{t})}\Big)\big(T_2(x,y;t,s)-T_2(\tilde{x},y;\tilde{t},s)\big) {\bf f}(y;s)\,dyds.
\endaligned
$$
Similarly to the proof of Theorem \ref{Lemma-3.1-SdL}, if 
$\sqrt{|t-s|+|x-y|^2}\leqslant\delta/2$ then (\ref{Qr}) gives
$$
|\widetilde J_1|\leqslant 2 C_5\,\mathcal{J}_{\sigma} (3\sqrt{2}\rho/2) \cdot \|{\bf f}\|_{{\cal C}(\overline{\cal A}_\rho)}.
$$
Thus, given $\varepsilon$ we can choose $\delta$ such that $|\widetilde J_1|\leqslant\varepsilon$. 

Next, $D_xD_y\Gamma^{x^*\!,\,t^*}_{\rho}(x,y;t,s)$ is continuous w.r.t. $(x;t)$ and w.r.t. $(x^*;t^*)$ for $(x;t)\ne (y;s)$. Therefore, $T_2(x,y;t,s)$ is continuous w.r.t. $(x;t)$ for $(x;t)\ne (y;s)$. Similarly to the proof of Theorem \ref{Lemma-3.1-SdL}, for chosen $\delta$ we obtain, as $(x;t)\to(\tilde{x};\tilde{t})$,
$$
|\widetilde J_2|\leqslant \|{\bf f}\|_{{\cal C}(\overline{\cal A}_\rho)}\cdot \text{meas}(\overline{\cal A}_\rho)\cdot\!
\max\limits_{(y;s)\in {\cal A}_{\rho}\setminus Q_\delta(\tilde{x};\tilde{t})} |T_2(x,y;t,s)-T_2(\tilde{x},y;\tilde{t},s)| \to 0,
$$
and the Lemma follows.
\end{proof}

We continue the proof of Theorem \ref{estimate-z-par}. Choose the value of $\widetilde \rho_0$ so small that $\mathcal{J}_{\sigma} (2\sqrt{2}\,\widetilde \rho_0) \leqslant \left(2C_5\right)^{-1}$, where $C_5$ is the constant from Lemma \ref{Lemma-A3}. Then by the Banach theorem the operator $\mathbb{I} -\mathbb{T}_2$ in
(\ref{operator-T2}) is invertible. This gives $\widetilde z\in \mathcal{C}^{1,0}_{x,t} (\overline{\cal A}_\rho)$. Moreover, Lemma \ref{Lemma-A2} and inequality (\ref{3.11-GW82-par}) provide (\ref{gradient-z}). 
The proof is complete.
\end{proof}

For $\rho \leqslant \widetilde \rho_0$ we introduce the Green function $\Gamma_{0,\rho}(x,y;t,s)$ of the operator $\mathcal{M}_0$ in the cylinder $\mathcal{A}_{\rho}$. By Theorem~2.6 \cite{CKP12}, $D_x\Gamma_{0,\rho}(x,y;t,s)$ is continuous for $(x;t)\ne (y;s)$, and the estimate 
\begin{equation}\label{DGamma0}
\begin{aligned}
|D_x\Gamma_{0,\rho}(x,y;t,s)| &\leqslant N_8 \min \Big\{\frac 1{(t-s)^{(n+1)/2}}; \frac {\dist \{y, \partial B_{\rho}(x^{\rho})\}} {(t-s)^{(n+2)/2}} \Big\} \times \\
&\times \exp\Big(-N_9\,\frac{|x-y|^2}{t-s}\Big)
\end{aligned}
\end{equation}
holds for any $(x;t), (y;s) \in {\cal A}_{\rho}$, $s<t$. Here $N_8$ and $N_9$ are the constants depending only on $n$, $\nu$, and $\sigma$. \medskip

Further, we introduce the barrier function $\widetilde v$ defined as the weak solution of the initial-boundary value problem
\begin{equation} \label{auxiliary-2-par}
\left\{
\begin{aligned}
\mathcal{M}\widetilde v&=&&0 &&\text{in}&&\ {\cal A}_{\rho},\\
\widetilde v&=&&0 &&\text{on}&&\ \partial'' {\cal A}_{\rho},\\
\widetilde v(x;-\rho^2)&=&& \varphi(\tfrac {x-x^{\rho}}{\rho}) &&\text{for}&&\ x\in B_{\rho}(x^{\rho}),
\end{aligned}
\right.
\end{equation}
where $\varphi$ is the same as in (\ref{auxiliary-1-par}).

\begin{thm} \label{estimate-v-par}
Let $\bf b$ satisfy the first relation in (\ref{b-par-condition}) with $\gamma=N_9(n,\nu,\sigma)$ (here $N_9$ is the constant in (\ref{DGamma0})). Then there exists a positive $\widehat\rho_0\leqslant\widetilde \rho_0$ such that for all $\rho \leqslant \widehat\rho_0$ the problem (\ref{auxiliary-2-par}) admits a unique solution $\widetilde v\in \mathcal{C}^{1,0}_{x,t}(\overline{\cal A}_{\rho})$. Moreover, the inequality
\begin{equation} \label{gradient-v-z}
|D\widetilde v(x;t)-D\widetilde z(x;t)|\leqslant C_6\,\frac{\omega^-_p (2\rho)}{\rho}
\end{equation}
holds true for any $(x;t)\in {\cal A}_{\rho}$. Here $C_6=C_6(n, \nu, \sigma)>0$, $\widehat\rho_0$ is completely defined by  $n$, $\nu$, $\sigma$, and $\omega$, while $\widetilde z\in \mathcal{C}^{1,0}_{x,t}(\overline{\cal A}_{\rho})$ is defined in (\ref{auxiliary-1-par}).
\end{thm}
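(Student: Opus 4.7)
The plan mirrors the proof of Theorem \ref{Lemma-3.1-SdL}, with the parabolic Green function $\Gamma_{0,\rho}$ of $\mathcal{M}_0$ in ${\cal A}_\rho$ playing the role of $G_{0,\rho}$ and a parabolic analog of Lemma \ref{Lemma-A1} replacing it. First I set $w^{(4)}=\widetilde v - \widetilde z$. Both $\widetilde v$ and $\widetilde z$ share the lateral and initial data from (\ref{auxiliary-2-par}) and (\ref{auxiliary-1-par}), so $w^{(4)}$ vanishes on $\partial'{\cal A}_\rho$. Since $\mathcal{M}\widetilde v=0$ and $\mathcal{M}_0\widetilde z=0$,
$$
\mathcal{M}_0 w^{(4)} \;=\; \mathcal{M}_0\widetilde v \;=\; -b^i D_i\widetilde v \;=\; -b^i\bigl(D_i w^{(4)}+D_i\widetilde z\bigr).
$$
Representing $w^{(4)}$ via $\Gamma_{0,\rho}$ and differentiating in $x_k$ gives the system
$$
(\mathbb{I}+\mathbb{T}_3)\,Dw^{(4)} \;=\; -\mathbb{T}_3\,D\widetilde z,
$$
where $\mathbb{T}_3$ is the matrix integral operator with kernel entries $T_3^{ki}(x,y;t,s)=D_{x_k}\Gamma_{0,\rho}(x,y;t,s)\,b^i(y;s)\,\chi_{\{s<t\}}$.

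The core step is the parabolic counterpart of Lemma \ref{Lemma-A1}: $\|\mathbb{T}_3\|_{\mathcal{C}\to\mathcal{C}}\leqslant C\,\omega^-_p(2\rho)$. To prove it I combine (\ref{DGamma0}) with three reductions. Algebraically, $\min\{1,\,a/\sqrt{t-s}\}\leqslant 2a/(a+\sqrt{t-s})$ applied to $a=\dist\{y,\partial B_\rho(x^\rho)\}$. Geometrically, since ${\cal A}_\rho\subset Q$ and the lateral surface of $Q$ is locally $\{x_n=0\}$, the identity $y_n\geqslant \rho-|y-x^\rho|$ for $y\in B_\rho(x^\rho)$ together with the spatial-temporal inclusion $Q_{\dist\{y,\partial B_\rho(x^\rho)\}}(y;s)\subset Q$ yields $\dist\{y,\partial B_\rho(x^\rho)\}\leqslant d_p(y;s)$. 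Finally, splitting the Gaussian $\exp(-N_9|x-y|^2/(t-s))$ in two and using $\exp(-c\xi)\leqslant C(c)(1+\xi)^{-1/2}$ converts $\sqrt{t-s}$ in the denominator into $\sqrt{|x-y|^2+t-s}$ at the cost of a fraction of the exponent constant. These three steps together force the integrand into precisely the form defining $\omega^-_p$, so that
$$
\int\limits_{Q_r(x;t)\cap{\cal A}_\rho}|D_{x_k}\Gamma_{0,\rho}(x,y;t,s)|\cdot|\mathbf{b}(y;s)|\,dyds \;\leqslant\; C\,\omega^-_p(r),\qquad r\leqslant 2\rho.
$$
Continuity of $\mathbb{T}_3\mathbf{f}$ for $\mathbf{f}\in\mathcal{C}(\overline{{\cal A}}_\rho)$ follows from splitting the integral over $Q_\delta(\tilde x;\tilde t)$ and its complement exactly as in the proof of Lemma \ref{Lemma-A1}: the first piece is $O(\omega^-_p(3\delta/2))$ and hence negligible by (\ref{b-par-condition}), while the second vanishes as $(x;t)\to(\tilde x;\tilde t)$ thanks to equicontinuity of $D_x\Gamma_{0,\rho}(\cdot,y;\cdot,s)$ away from the diagonal, which is provided by Theorem~2.6 \cite{CKP12}.

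Choosing $\widehat\rho_0\leqslant\widetilde\rho_0$ so small that $C\,\omega^-_p(2\widehat\rho_0)\leqslant 1/2$, the Banach theorem inverts $\mathbb{I}+\mathbb{T}_3$ in $\mathcal{C}(\overline{{\cal A}}_\rho)$, which produces a unique $w^{(4)}\in\mathcal{C}_{x,t}^{1,0}(\overline{{\cal A}}_\rho)$ and hence the unique solution $\widetilde v$ of (\ref{auxiliary-2-par}). Combining the operator norm bound with (\ref{gradient-z}) gives $|Dw^{(4)}|\leqslant C\,\omega^-_p(2\rho)/\rho$, which is exactly (\ref{gradient-v-z}).

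The main obstacle is the parabolic analog of Lemma \ref{Lemma-A1}. In the elliptic case the factor $\dist\{y,\partial A_\rho\}$ furnished by the Green function estimate could be substituted directly by $d(y)$ to match the integrand in $\omega$. In the parabolic case the denominator $\sqrt{t-s}$ coming from the Green function does not align with $\sqrt{|x-y|^2+t-s}$ appearing in $\omega^-_p$, and the passage between the two requires trading part of the Gaussian, which explains the precise matching between the constant $\gamma$ in (\ref{b-par-condition}) and the constant $N_9$ in (\ref{DGamma0}).
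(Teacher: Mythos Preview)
Your argument is correct and follows the paper's own proof essentially line for line: introduce $w^{(4)}=\widetilde v-\widetilde z$, derive $(\mathbb{I}+\mathbb{T}_3)Dw^{(4)}=-\mathbb{T}_3 D\widetilde z$, prove the parabolic analogue of Lemma~\ref{Lemma-A1} for $\mathbb{T}_3$, and invert via the Banach theorem. You are actually more explicit than the paper on one point it glosses over, namely the comparison of $\min\{1,\dist(y,\partial B_\rho(x^\rho))/\sqrt{t-s}\}$ in (\ref{DGamma0}) with the factor $d_p/(d_p+\sqrt{|x-y|^2+t-s})$ in $\omega^-_p$; your three reductions are the right way to bridge this. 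The only wrinkle is your third step: trading part of the Gaussian leaves an exponent strictly smaller than $N_9$, so the resulting bound is in terms of $\omega^-_p$ with some $\gamma<N_9$, not $\gamma=N_9$ as stated---this is a harmless imprecision (the correct $\gamma$ still depends only on $n,\nu,\sigma$) that the paper's terse ``combination of (\ref{DGamma0}) with (\ref{b-par-condition})'' shares.
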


\begin{proof}
We follow the line of proof of Theorem \ref{Lemma-3.1-SdL}.
Consider in ${\cal A}_{\rho}$ the auxiliary function
$w^{(4)}=\widetilde v-\widetilde z$.  We observe that $w^{(4)}$ vanishes  on $\partial' {\cal A}_{\rho}$, and 
$$
\mathcal{M}_0w^{(4)}=-b^i \left(D_iw^{(4)}+D_i\widetilde z\right) \quad \text{in}\quad {\cal A}_{\rho}.
$$
Similarly to the proof of Theorem \ref{Lemma-3.1-SdL}, $D_kw^{(4)}$ can be represented in ${\cal A}_{\rho}$ as
\begin{align*}
D_kw^{(4)}(x;t)= &-\int\limits_{{\cal A}_{\rho}\cap \{s\leqslant t\}} D_{x_k}\Gamma_{0,\rho}(x,y;t,s)\times\\
&\times b^i(y;s)  \left(D_iw^{(4)}(y;s)+D_i\widetilde z(y;s)\right)\,dyds.
\end{align*}
Therefore, we get the relation
\begin{equation} \label{operator-T3}
\left(\mathbb{I} +\mathbb{T}_3 \right)Dw^{(4)} =-\mathbb{T}_3 D\widetilde z, 
\end{equation}
where $\mathbb{T}_3$ denotes the matrix operator whose $(k,i)$ entries are integral operators with kernels $D_{x_k}\Gamma_{0,\rho}(x,y;t,s)b^i(y;s)\chi_{\{s\le t\}}$.

The statement of Theorem follows from the next assertion.

\begin{lemma} \label{Lemma-A4}
The operator $\mathbb{T}_3$ is bounded in ${\cal C}(\overline{\cal A}_\rho)$, and
$$
\|\mathbb{T}_3\|_{{\cal C}\to{\cal C}}\leqslant C_7\,\omega^-_p(2\rho),
$$
where $C_7$ depends only on $n$, $\nu$, and $\sigma$.
\end{lemma}

\begin{proof} Recall that $\rho\le R/2$ and $Q_R \cap \mathbb{R}^{n+1}_+\subset Q$. Thus $\dist \{y, \partial B_{\rho}(x^{\rho})\} \leqslant d_p(y;s)$ for any $(y;s)\in {\cal A}_{\rho}$, and the combination of estimate (\ref{DGamma0}) with the first relation in (\ref{b-par-condition}) gives for $r\leqslant 2\rho$
\begin{equation}\label{Qr2}
\int\limits_{Q_r(x;t)\cap{\cal A}_{\rho}}\!
|D_x\Gamma_{0,\rho}(x,y;t,s)|\,|\mathbf{b}(y;s)|\,dyds
\leqslant N_{10}(n)N_8\,\omega^-_p(r), \quad 
x\in \overline {\cal A}_{\rho},
\end{equation}
(here $N_8$ is the constant in (\ref{DGamma0})).

The rest of the proof repeats literally the proof of Lemma \ref{Lemma-A1}.
\end{proof}

We continue the proof of Theorem \ref{estimate-v-par}. Choose the value of $\widehat \rho_0$ so small that $\omega (2\widehat \rho_0) \leqslant \left(2C_7\right)^{-1}$, where $C_7$ is the constant from Lemma \ref{Lemma-A4}. Then by the Banach theorem the operator $\mathbb{I} +\mathbb{T}_3$ in
(\ref{operator-T3}) is invertible. This gives the existence and uniqueness of $w^{(4)}\in\mathcal{C}^{1,0}_{x,t}(\overline{\cal A}_\rho)$, and thus, the unique solvability of the problem (\ref{auxiliary-2-par}). Moreover, Lemma \ref{Lemma-A4} and inequality (\ref{gradient-z}) provide (\ref{gradient-v-z}). The proof is complete.
\end{proof}

To prove Theorem \ref{par-main-theorem} we need the following maximum principle.

\begin{lemma} \label{Lemma-A5}
Let $\mathcal{M}$ be defined by (\ref{numer-par}), and let assumptions (\ref{a-par-condition})-(\ref{b-par-condition}) be satisfied in a domain ${\cal E} \subset \mathbb{R}^{n+1}$. Let a function $w\in \mathcal{C}^{1,0}_{x,t}({\cal E})$ satisfy $\mathcal{M}w \geqslant 0$ in ${\cal E}$. If $w$ attains its minimum in a point $(x^0;t^0)\in\overline{\cal E}\setminus \partial'{\cal E}$ then $w=const$ on the closure of the dependence set of $(x^0;t^0)$.
\end{lemma}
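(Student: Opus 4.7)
My plan is to follow the same route as in the elliptic case (Lemma~\ref{Lemma-A2}): reduce the strong minimum principle to a parabolic Harnack inequality for $\mathcal{M}$ under our hypotheses, and then propagate the vanishing of $w-w(x^0;t^0)$ along the dependence set of $(x^0;t^0)$ by a standard chain argument.

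First I would set $\widetilde w := w - w(x^0;t^0)$. Then $\widetilde w \geqslant 0$, $\mathcal{M}\widetilde w \geqslant 0$ in $\mathcal{E}$, and $\widetilde w(x^0;t^0)=0$, so the task reduces to showing $\widetilde w \equiv 0$ on the closure of the dependence set of $(x^0;t^0)$.

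Second I would invoke the parabolic Harnack inequality established in \cite{Zh96} for divergence-form parabolic operators with H\"older continuous leading coefficients and $\mathbf{b} \in {\bf K}_n$. Exactly as in the proof of Lemma~\ref{Lemma-A2}, H\"older continuity of $a^{ij}$ enters \cite{Zh96} only through the pointwise gradient estimate of the Green function of $\mathcal{M}_0$, namely (\ref{DGamma0}); this estimate is supplied under mere Dini continuity by Theorem~2.6 of \cite{CKP12}. By Remark~\ref{weaker-Kato-parab} our hypothesis (\ref{b-par-condition}) is precisely $\mathbf{b} \in {\bf K}_n$ on every strict interior subdomain of $\overline{\mathcal{E}}\setminus \partial'\mathcal{E}$. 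Combining these two observations gives the parabolic Harnack inequality for $\mathcal{M}$ on every such subdomain.

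Third I would propagate the vanishing of $\widetilde w$ by a covering argument. Given $(y;s)$ in the dependence set of $(x^0;t^0)$ and a map $\mathfrak{F}:[0,1]\to\overline{\mathcal{E}}$ as in the definition (with $\mathfrak{F}_{n+1}$ strictly increasing), I would cover the compact set $\mathfrak{F}([\delta,1-\delta])$ by a finite chain of parabolic cylinders compactly contained in $\mathcal{E}$, with centers on $\mathfrak{F}$ and monotone time coordinates chosen so that the Harnack inequality connects consecutive cylinders. Starting from a cylinder sufficiently close to $(x^0;t^0)$, where $\widetilde w$ is arbitrarily small by continuity, Harnack forces $\widetilde w \equiv 0$ on each cylinder in succession, hence at $(y;s)$. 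Letting $\delta \to 0$ and using continuity of $\widetilde w$ finishes the argument on the closure.

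The main obstacle is the second step: one must audit \cite{Zh96} carefully to confirm that the H\"older assumption is used nowhere beyond the Green function gradient bound, so that substituting in the Dini-continuous estimate of \cite{CKP12} yields the Harnack inequality without further modification. Once this is secured, the chain-of-cylinders argument in Step~3 is routine, using only that $(x^0;t^0) \notin \partial'\mathcal{E}$ to guarantee that backward parabolic cylinders centered slightly below $(x^0;t^0)$ stay inside $\mathcal{E}$.
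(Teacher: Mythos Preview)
Your proposal is correct and follows essentially the same route as the paper: both reduce the strong minimum principle to the parabolic Harnack inequality of \cite{Zh96}, replace the H\"older hypothesis there by Dini continuity via the Green-function gradient estimate of \cite{CKP12} (Theorem~2.6), and invoke Remark~\ref{weaker-Kato-parab} to obtain $\mathbf{b}\in{\bf K}_n$ on strict interior subdomains. The paper additionally notes that \cite{Zh96} ostensibly assumes (\ref{needed}) for \emph{all} $\gamma>0$ but in fact only uses the specific $\gamma$ arising in the gradient bound for $\Gamma_{0,\rho}$---a point you leave implicit---and conversely you spell out the chain-of-cylinders propagation that the paper compresses into the phrase ``the strong maximum principle holds''; these are differences of exposition, not of method.
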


\begin{proof} The Harnack inequality for parabolic divergence-type operators was established in \cite{Zh96} under the assumptions that the leading coefficients $a^{ij}$ are H{\"o}lder continuous and $\mathbf{b}$ satisfy (\ref{needed}) with arbitrary $\gamma>0$ (and integration over $(t-r^2,t+r^2)\times \mathbb{R}^n$ that is inessential, see Remark \ref{weaker-Kato-parab}). 

As it was mentioned in the proof of Lemma \ref{Lemma-A2}, the first assumption can be replaced by the Dini continuity. Further, in fact only (\ref{needed}) with a certain $\gamma$ occuring in the estimate of $D\Gamma_{0,\rho}$ is used in \cite{Zh96}. The latter coincides with the assumption $\mathbf{b}\in {\bf K}_n$.

Since our assumption (\ref{b-par-condition}) implies $\mathbf{b}\in {\bf K}_n$ in any strict interior subdomain of $\overline{Q} \setminus \partial' Q$ (see Remark~\ref{weaker-Kato-parab}), the strong maximum principle holds for the operator $\mathcal{M}$.
\end{proof}

\begin{remark}\label{remark-8}
This Lemma is the only point where we need the second relation in (\ref{b-par-condition}). If we could prove at least weak maximum principle for the operator $\cal M$ using only the quantity $\omega^-_p$, we did not need $\omega^+_p$ at all. Unfortunately, we cannot do it, and the question whether the second relation in (\ref{b-par-condition}) is necessary for the Boundary Point Principle remains open.
\end{remark}

\begin{proof} [Proof of Theorem~\ref{par-main-theorem}]
It is well known that the Boundary Point Principle holds true  for the operator with constant coefficients. 
Using this statement for the operator $\mathcal{M}_0^{x^*\!,\,t^*}$ with $x^*=0$, $t^*=0$ in the cylinder ${\cal A}_1$ and rescaling ${\cal A}_1$ into ${\cal A}_{\rho}$  we get the estimate
$$
D_n\psi_{0,0}(0;0)\geqslant \frac{N_{11}(n, \nu)}{\rho}.
$$

Next, the relation (\ref{Dw-Green-par}), Lemma~\ref{Lemma-A3}, and inequality (\ref{gradient-z}) imply for sufficiently small $\rho$
$$
D_n\widetilde z(0;0) \geqslant D_n\psi_{0,0}(0;0)-
\|\mathbb{T}_2D\widetilde z\|_{{\cal C}(\overline{\cal A}_\rho)}\geqslant \frac{N_{11}}{\rho}-C_4C_5\frac {\mathcal{J}_{\sigma} (2\sqrt{2}\rho)}{\rho}
\geqslant \frac{N_{11}}{2\rho}.
$$
The relation (\ref{gradient-v-z}) gives for sufficiently small $\rho$
$$
D_n\widetilde v(0;0) \geqslant D_n\widetilde z(0;0)- |D\widetilde v(0;0)-D\widetilde z(0;0)|\geqslant
\frac{N_{11}}{2\rho}-C_6\,\frac{\omega^-_p (2\rho)}{\rho}\geqslant \frac{N_{11}}{4\rho}. 
$$
We fix such a $\rho$. Since $u$ is nonconstant on the dependence set of $(0;0)$, Lemma \ref{Lemma-A5} ensures 
$$
u(x;-\rho^2)-u(0;0)>0 \qquad \text{for}\quad x\in B_{3\rho/4}(x^{\rho}).
$$ 
Therefore, we have for sufficiently small $\varepsilon$
$$
{\cal M}(u-u(0;0)-\varepsilon \widetilde v)\geqslant 0\quad \text{in} \ \ {\cal A}_{\rho};\qquad u-u(0;0)-\varepsilon \widetilde v\geqslant 0 \quad \text{on} \ \ \partial' {\cal A}_{\rho}.
$$
By Lemma~\ref{Lemma-A5} the estimate $u-u(0;0) \geqslant \varepsilon \widetilde v$ holds true in ${\cal A}_{\rho}$, with equality at the origin. This gives
$$
\frac{\partial u}{\partial \mathbf{n}}(0;0)=-D_nu(0;0)\leqslant -\varepsilon D_n\widetilde v(0;0),
$$
which completes the proof.
\end{proof}

\begin{remark} \label{remark-9}
As in elliptic case, the statement of Theorem~\ref{par-main-theorem} is also valid for weak supersolutions of equation (\ref{numer-par}). Namely, let $Q$ and the coefficients of $\mathcal{M}$ be the same as in Theorem~\ref{par-main-theorem}. Suppose that for a  function $u\in L^2(Q)$ with $Du \in L^2(Q)$ the  assumptions
$$
\sup\limits_{t}\|u(\cdot;t)\|_{L^2}<\infty, \quad \text{and} \quad |\mathbf{b}\cdot Du|\in L^1(Q)
$$
are fulfilled. Finally, let $u$ satisfy
the inequality $\mathcal{M}u \geqslant 0$ in the weak sense. 
 
 Then, if $u$ attends its minimum at a point $(x^0;t^0) \in \partial'' Q$, and $u$ is nonconstant on the dependence set of $(x^0;t^0)$, we have
$$
\liminf\limits_{\varepsilon \to 0}\frac{u(x^0-\varepsilon {\bf n}(x^0);t^0)-u(x^0;t^0)}{\varepsilon} >0.
$$
\end{remark}

\begin{remark} \label{remark-10}
Similarly to Remark \ref{remark-5}, one can take as coefficients $b^i$ the signed measures, satisfying condition (\ref{b-par-condition}).
\end{remark}

\section{Some sufficient conditions for the validity of (\ref{b-condition}) and (\ref{b-par-condition})}

In this section we list several simple sufficient conditions on lower-order coefficients providing the validity of assumptions (\ref{b-condition}) and (\ref{b-par-condition}) for elliptic and parabolic operators, respectively. These conditions are close to ones imposed in \cite{Naz12}, where equations in {\it non-divergence form} were studied.

Throughout this section we will denote various constants depending on $n$ only by the letter $N$ without indices, and the constants depending on $n$ and $\gamma$ only by the letter $\widehat{N}$ without indices. 
To simplify the notation, we assume that ${\bf b}$ is extended by zero outside of~$\Omega$ (of $Q$).

In the elliptic case we consider two types of restrictions: distributed drift
\begin{equation} \label{b-1}
\mathbf{b} \in L^n(\Omega); \qquad \sup\limits_{x\in \Omega}\|\mathbf{b}\|_{n, B_{\rho}(x)} \leqslant C \sigma (\rho), 
\end{equation}
and near-boundary drift
\begin{equation} \label{b-2}
|\mathbf{b}(y)| \leqslant C\,\frac{\sigma(d(y))}{d(y)}
\end{equation}
(recall that $\sigma\in \mathcal{D}$).

\begin{lemma} \label{Lemma-A6}
Any of the restrictions (\ref{b-1}) and (\ref{b-2}) implies the validity of condition (\ref{b-condition}). 
\end{lemma}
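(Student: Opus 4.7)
\medskip
\noindent\emph{Proof plan.} The plan is to show in both cases the bound $\omega(r)\leqslant C\,\mathcal{J}_\sigma(r)$, which suffices because $\sigma(\tau)/\tau$ is summable at the origin and hence $\mathcal{J}_\sigma(r)\to 0$ as $r\to 0$.

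For (\ref{b-1}) I would first drop the harmless weight $d(y)/(d(y)+|x-y|)\leqslant 1$ and decompose $B_r(x)\cap\Omega$ into dyadic annuli $A_k:=B_{r/2^k}(x)\setminus B_{r/2^{k+1}}(x)$. On $A_k$ the kernel $|x-y|^{-(n-1)}$ is comparable to $(r/2^k)^{-(n-1)}$, while H\"older's inequality with exponents $n$ and $n/(n-1)$ together with (\ref{b-1}) yields
$$
\int_{A_k}|\mathbf{b}(y)|\,dy\leqslant \|\mathbf{b}\|_{n,B_{r/2^k}(x)}\,|A_k|^{(n-1)/n}\leqslant C\,\sigma(r/2^k)\,(r/2^k)^{n-1}.
$$
Summing over $k\geqslant 0$ gives $\omega(r)\leqslant C\sum_{k\geqslant 0}\sigma(r/2^k)$, and the monotonicity of $\sigma(\tau)/\tau$ delivers $\sum_{k\geqslant 0}\sigma(r/2^k)\leqslant 2\mathcal{J}_\sigma(r)$ by comparing each term with $\int_{r/2^{k+1}}^{r/2^k}\sigma(\tau)/\tau\,d\tau$.

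For (\ref{b-2}) I would substitute $|\mathbf{b}(y)|\leqslant C\sigma(d(y))/d(y)$, reducing the integrand in (\ref{b-condition}) to $\sigma(d(y))/\bigl[|x-y|^{n-1}(d(y)+|x-y|)\bigr]$; here the weight $d(y)/(d(y)+|x-y|)$ is essential, since it cancels the $d(y)$ in the denominator of $|\mathbf{b}|$. The decisive step is the pointwise bound
$$
\frac{\sigma(d(y))}{d(y)+|x-y|}\leqslant \frac{\sigma(|x-y|)}{|x-y|},
$$
obtained by splitting cases: if $d(y)\geqslant |x-y|$, bound $d(y)+|x-y|\geqslant d(y)$ and use that $\sigma(\tau)/\tau$ is decreasing; if $d(y)<|x-y|$, bound $d(y)+|x-y|\geqslant |x-y|$ and use that $\sigma$ is increasing. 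Inserting this estimate and passing to polar coordinates gives $\omega(r)\leqslant C\int_{B_r(x)}\sigma(|x-y|)\,|x-y|^{-n}\,dy=C\,|S^{n-1}|\,\mathcal{J}_\sigma(r)$.

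The only genuine subtlety is the two-case pointwise estimate in (\ref{b-2}); once it is in hand, the argument collapses to a single one-dimensional integral. Case (\ref{b-1}) is a routine H\"older-plus-dyadic calculation in which the weight plays no role, whereas in (\ref{b-2}) the presence of the weight in (\ref{b-condition}) is exactly what makes the reduction possible.
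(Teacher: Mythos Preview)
Your proof is correct and follows essentially the same approach as the paper: dyadic annuli plus H\"older for (\ref{b-1}), and the same pointwise bound $\sigma(d(y))/(d(y)+|x-y|)\leqslant \sigma(|x-y|)/|x-y|$ for (\ref{b-2}). The only cosmetic difference is that the paper obtains this pointwise inequality in a single chain, $\sigma(d(y))/(d(y)+|x-y|)\leqslant \sigma(d(y)+|x-y|)/(d(y)+|x-y|)\leqslant \sigma(|x-y|)/|x-y|$, rather than by splitting into the two cases you describe.
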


\begin{proof} 
Let (\ref{b-1}) hold. We set $B^k=B_{r/{2^k}}$ and estimate
$$
\begin{aligned}
\omega(r)&\leqslant\sup\limits_{x\in \Omega}\,\sum\limits_{k=0}^{\infty} \,\,\int\limits_{B^k\setminus B^{k+1}} \frac {|{\bf b}(x-y)|}{|y|^{n-1}}\,dy\\
&\leqslant \sup\limits_{x\in \Omega}\,\sum\limits_{k=0}^{\infty} \,\bigg(\int\limits_{B^k\setminus B^{k+1}}|{\bf b}(x-y)|^n\,dy\bigg)^{\frac 1n}\cdot \bigg(\int\limits_{B^k\setminus B^{k+1}}\frac {dy}{|y|^n}\bigg)^{\frac {n-1}n}\\
&\leqslant NC\sum\limits_{k=0}^{\infty} \sigma(r/2^k)\leqslant NC \mathcal{J}_{\sigma}(2r),
\end{aligned}
$$
and (\ref{b-condition}) follows.

Now 
let (\ref{b-2}) hold. We use the decay of $\sigma (\tau)/\tau$ to estimate
$$
\frac{\sigma(d(y))}{d(y)+|x-y|}\leqslant
\frac{\sigma(d(y)+|x-y|)}{d(y)+|x-y|}\leqslant
\frac{\sigma(|x-y|)}{|x-y|}.
$$
Therefore,
$$
\omega(r)\leqslant C\int\limits_{B_{r}(x)}\frac{\sigma(|x-y|)}{|x-y|^n}\leqslant NC\mathcal{J}_{\sigma}(r),
$$
and (\ref{b-condition}) again follows.
\end{proof}

In the parabolic case we consider the following  analogue of (\ref{b-1}): 
\begin{equation} \label{b-1-par}
\mathbf{b} \in L^{n+1}(Q); \qquad \sup\limits_{(x;t)\in Q} \|\mathbf{b}\|_{n+1, Q_{\rho}(x;t)} \leqslant C \sigma (\rho)\rho^{1/(n+1)}, 
\end{equation}
as well as the analog of (\ref{b-2}):
\begin{equation} \label{b-2-par}
|\mathbf{b}(y;s)| \leqslant C\,\frac{\sigma(d_p(y;s))}{d_p(y;s)}.
\end{equation}

\begin{lemma} \label{Lemma-A7}
Any of the restrictions (\ref{b-1-par}) and  (\ref{b-2-par}) implies the validity of condition (\ref{b-par-condition}) with arbitrary $\gamma>0$.
\end{lemma}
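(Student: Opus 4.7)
The proof will mirror the structure of Lemma \ref{Lemma-A6}, with two parallel arguments (one per condition) and a preliminary reduction of the Gaussian kernel. The first step is a pointwise bound converting the heat-type kernel into a parabolic Riesz-type kernel: using $e^{-\gamma u} \leqslant C(\gamma,n)\,u^{-(n+1)/2}$ for $u\geqslant 1$ and splitting according to whether $|x-y|^2 \leqslant t-s$ or not, one obtains
$$
\frac{1}{(t-s)^{(n+1)/2}}\exp\Big(-\gamma\,\frac{|x-y|^2}{t-s}\Big)\leqslant \frac{\widehat{N}}{\tau^{n+1}},\qquad \tau:=\sqrt{t-s}+|x-y|,
$$
with the analogous estimate for $\omega^+_p$ (time reversed). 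After this reduction, both cases of the lemma become parabolic analogues of the elliptic argument.

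For condition (\ref{b-1-par}) I would follow the dyadic approach of Lemma \ref{Lemma-A6}. Write $Q^k:=Q_{r/2^k}(x;t)$ so that $\tau\sim r/2^k$ on $Q^k\setminus Q^{k+1}$, hence the reduced kernel is bounded on this shell by $C\,(2^k/r)^{n+1}$. Applying H\"older's inequality with exponents $n+1$ and $(n+1)/n$ gives, using $|Q^k|\sim (r/2^k)^{n+2}$,
$$
\int\limits_{Q^k\setminus Q^{k+1}}\frac{|\mathbf{b}|}{\tau^{n+1}}\,dyds\leqslant C\,(2^k/r)^{n+1}\cdot(r/2^k)^{(n+2)n/(n+1)}\cdot \|\mathbf{b}\|_{n+1,Q^k}\leqslant C\sigma(r/2^k),
$$
after inserting the hypothesis $\|\mathbf{b}\|_{n+1,Q^k}\leqslant C\sigma(r/2^k)(r/2^k)^{1/(n+1)}$. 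Summing over $k$ and using the standard comparison $\sum_{k\ge 0}\sigma(r/2^k)\leqslant 2\,\mathcal{J}_\sigma(r)$ (which follows from monotonicity of $\sigma(\tau)/\tau$) yields $\omega^-_p(r)\leqslant \widehat{N}C\,\mathcal{J}_\sigma(r)\to 0$. The factor $d_p/(d_p+\tau)\leqslant 1$ is simply discarded here.

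For condition (\ref{b-2-par}) I would use the factor $d_p/(d_p+\tau)$ in a crucial way, exactly as in the second half of Lemma \ref{Lemma-A6}. A case split on $d_p\gtrless \tau$, together with the monotonicity of $\sigma$ and the decay of $\sigma(\tau)/\tau$, gives the pointwise estimate
$$
|\mathbf{b}(y;s)|\cdot\frac{d_p(y;s)}{d_p(y;s)+\tau}\leqslant C\,\frac{\sigma(d_p)}{d_p+\tau}\leqslant C\,\frac{\sigma(\tau)}{\tau}.
$$
Combined with the Gaussian-to-Riesz reduction, this reduces $\omega^-_p(r)$ to $C\int_{Q_r}\sigma(\tau)/\tau^{n+2}\,dyds$. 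Passing to the parabolic polar-type change of variables $\varrho=|x-y|/\sqrt{t-s}$, $\tau=\sqrt{|x-y|^2+(t-s)}$ (as already performed in the proof of Lemma \ref{Lemma-invariance}) eliminates the angular variable and leaves $C\int_0^{r\sqrt2}\sigma(\tau)/\tau\,d\tau = C\,\mathcal{J}_\sigma(r\sqrt2)\to 0$.

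The estimates for $\omega^+_p$ follow by the obvious time-reflection in the inequalities and the change of variables. The main technical obstacle is verifying that the dyadic argument in the first case behaves correctly in the anisotropic parabolic setting; the critical computation is that $\|\tau^{-(n+1)}\|_{L^{(n+1)/n}(Q^k)}\sim (r/2^k)^{-1/(n+1)}$, exactly matching the $(r/2^k)^{1/(n+1)}$ factor in the hypothesis so as to produce $\sigma(r/2^k)$. No new ideas beyond the elliptic argument are needed once the Gaussian is absorbed.
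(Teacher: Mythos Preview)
Your proposal is correct and follows the same overall architecture as the paper: dyadic shells plus H\"older for (\ref{b-1-par}), and the pointwise bound $\sigma(d_p)/(d_p+\tau)\le \sigma(\tau)/\tau$ followed by a parabolic change of variables for (\ref{b-2-par}).

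The one genuine difference is your preliminary Gaussian-to-Riesz reduction $(t-s)^{-(n+1)/2}\exp(-\gamma|x-y|^2/(t-s))\le \widehat N\,\tau^{-(n+1)}$. The paper does \emph{not} do this: it keeps the Gaussian throughout, and in the (\ref{b-1-par}) case applies H\"older with the Gaussian inside the dual factor $\Phi_k$, then evaluates $\Phi_k$ via the $(\varrho,\tau)$-substitution to obtain $\Phi_k\le \widehat N (r/2^k)^{-1/n}$. Your route is slightly more elementary here --- after the Riesz reduction the kernel is constant on each shell, so the dual-norm computation is just $\|\tau^{-(n+1)}\|_{(n+1)/n,\,Q^k}\sim (r/2^k)^{-1/(n+1)}$, matching the extra $\rho^{1/(n+1)}$ in the hypothesis. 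For (\ref{b-2-par}) the two approaches coincide once you note that the $\varrho$-integral $\int_0^\infty \varrho^{n-1}(\varrho^2+1)^{-(n+2)/2}\,d\varrho$ converges even without the Gaussian (the extra decay coming from $1/\tau^{n+1}$ instead of $1/(t-s)^{(n+1)/2}$ supplies the needed factor $(\varrho^2+1)^{-1}$). One cosmetic point: your $\tau=\sqrt{t-s}+|x-y|$ in the reduction and $\tau=\sqrt{|x-y|^2+(t-s)}$ in the change of variables differ; they are of course comparable, but it is cleaner to fix one convention.
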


\begin{proof} 
We estimate the quantity $\omega^-_p(r)$; the case of $\omega^+_p(r)$ is considered along the same lines.

Let (\ref{b-1-par}) hold. We set $Q^k=Q_{r/{2^k}}$ and obtain
$$
\begin{aligned}
\omega^-_p(r)&\leqslant\sup\limits_{(x;t)\in Q}\, \sum\limits_{k=0}^{\infty}\, \int\limits_{Q^k\setminus Q^{k+1}}\frac{|\mathbf{b}(x-y;t+s)|}{(-s)^{(n+1)/2}}\cdot \exp{\left(-\gamma\,\frac{|y|^2}{-s}\right)}\,dy ds
\\
&\leqslant \sup\limits_{(x;t)\in Q}\,\sum\limits_{k=0}^{\infty} \,\bigg(\int\limits_{Q^k\setminus Q^{k+1}}
|{\bf b}(x-y;t+s)|^{n+1}\,dyds\bigg)^{1/(n+1)}
\cdot \Phi_k^{n/(n+1)}\\
&\leqslant C\sum\limits_{k=0}^{\infty} \sigma(r/2^k)
\cdot(r/2^k)^{1/(n+1)}\cdot \Phi_k^{n/(n+1)},
\end{aligned}
$$
where
$$
\Phi_k=\int\limits_{Q^k\setminus Q^{k+1}}
\exp{\left(-\gamma\,\frac{n+1} n\,\frac{|y|^2}{-s}\right)}\cdot
\frac{dyds}{(-s)^{(n+1)^2/2n}}.
$$

Change of variables $\varrho=|y|/\sqrt{-s}$, 
$\tau=\sqrt{|y|^2-s}$ gives
$$
\Phi_k\le\int\limits_{r/2^{k+1}}^{r\sqrt{2}/2^k}
\int\limits_0^{\infty}
\exp{\left(-\gamma\,\frac{n+1} n\,\varrho^2\right)}\cdot
\frac{2\varrho^{n-1}(\varrho^2+1)^{1/2n}}{\tau^{1+1/n}}\,d\varrho d\tau\leqslant \frac {\widehat N}{(r/2^k)^{1/n}}.
$$
Thus,
$$
\omega^-_p(r)\leqslant \widehat NC\sum\limits_{k=0}^{\infty} \sigma(r/2^k)
\leqslant \widehat NC \mathcal{J}_{\sigma}(2r),
$$
and the first relation in (\ref{b-par-condition}) follows.

Now 
let (\ref{b-2-par}) hold. As in Lemma \ref{Lemma-A6}, we use the decay of $\sigma (\tau)/\tau$ to estimate
$$
\frac{\sigma(d_p(y;s))}{d_p(y;s)+\sqrt{|x-y|^2+t-s}}\leqslant
\frac{\sigma(\sqrt{|x-y|^2+t-s})}{\sqrt{|x-y|^2+t-s}}.
$$
Therefore,
$$
\omega^-_p(r)\leqslant C\int\limits_{Q_{r}(x;t)}\exp{\left(-\gamma\,\frac{|x-y|^2}{t-s}\right)}\cdot\frac{\sigma(\sqrt{|x-y|^2+t-s})}{\sqrt{|x-y|^2+t-s}}\,\frac{dyds}{(t-s)^{(n+1)/2}}.
$$
Change of variables $\varrho=|x-y|/\sqrt{t-s}$, 
$\tau=\sqrt{t-s+|x-y|^2}$ gives
$$
\omega^-_p(r)\leqslant C\int\limits_0^{r\sqrt{2}}
\int\limits_0^{\infty}\exp{\left(-\gamma\varrho^2\right)}\cdot\frac {\varrho^{n-1}}{\sqrt{\varrho^2+1}} \,\frac{\sigma (\tau)}{\tau}\, d\varrho d\tau\leqslant \widehat NC\mathcal{J}_{\sigma}(r\sqrt{2}),
$$
and the first relation in (\ref{b-par-condition}) again follows.
\end{proof}

\bibliography{Bibliography_HopfDiv_}


\Addresses

\end{document}